\theoremstyle{plain}
\newtheorem{prop}{Proposition}[section]
\newtheorem{coro}[prop]{Corollary}
\newtheorem{conj}[prop]{Conjecture}
\newtheorem{lemm}[prop]{Lemma}
\newtheorem{thm}[prop]{Theorem}
\theoremstyle{definition}
\newtheorem{defn}[prop]{Definition}
 \DeclareMathOperator{\Br}{Br}
\def\mcg#1;#2{\Gamma_{#1,#2}}
\def\fg#1;#2{\Pi_{#1,#2}}
\def\tb#1;#2{\mathscr{K}_{\frac{#1}{#2}}}
\begin{document}

\title[Further Study of Kanenobu Knots]
{Further Study of Kanenobu Knots}

\keywords{Crossing number, Kanenobu knots, Khovanov homology}
\author{Khaled Qazaqzeh}
\address{Department of Mathematics\\ Faculty of Science \\ Kuwait University\\
P. O. Box 5969\\ Safat-13060\\ Kuwait, State of Kuwait}
\email{khaled@sci.kuniv.edu.kw}

\author{Isra Mansour}

\address{Department of Mathematics\\
Faculty of Science \\ Yarmouk University\\ Irbed, Jordan}
\email{esramansour@yahoo.com}

\subjclass[2000]{57M27}


\date{15/04/2014}

\begin{abstract}
We determine the rational Khovanov bigraded homology groups of all
Kanenobu knots. Also, we determine the crossing number for all
Kanenobu knots $K(p,q)$ with $pq > 0$ or $|pq|\leq \max \{|p|,
|q|\}$. In the case where $pq < 0$ and $|pq| > \max \{|p|, |q|\}$,
we conjecture that the crossing number is $|p| + |q| + 8$.
\end{abstract}

\maketitle

\section{Introduction}

Shortly after the discovery of the Jones polynomial, the HOMFLY-PT,
and the Kauffman polynomial invariants, Kanenobu in \cite{Ka1}
introduced an infinite family of knots that consists of infinite
classes of knots that have the same HOMFLY-PT and Jones polynomials
which are hyperbolic, fibered, ribbon, of genus 2 and 3-bridge, but
with distinct Alexander module structures .

Kanenobu knots are denoted by $K(p, q)$ for two integers $p$ and
$q$, where $p$ and $q$ denote the number of half twists as given in
figure \ref{figure13}.

\begin{figure}[h]
\centering
\includegraphics[width=14cm,height=4cm]{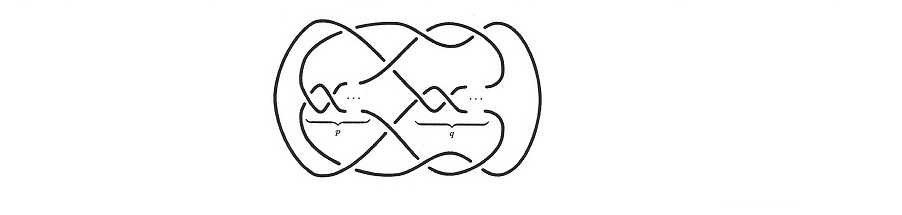}
\caption{The diagram of the Kanenobu knot $K(p,q)$ with $p,q> 0$}
\label{figure13}
\end{figure}

The motivation behind defining Kanenobu knots is the following
theorem:

\begin{thm}[Main Theorem, \cite{Ka1}]\label{main}
There exists infinitely many examples of infinitely many knots in
$\mathbb{S}^{3}$ with the same HOMFLY-PT polynomial invariant and,
therefore, the same Jones polynomial but distinct Alexander module
structures which are hyperbolic, fibered, ribbon, of genus 2 and
3-bridge.
\end{thm}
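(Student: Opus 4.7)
The plan is to establish Theorem~\ref{main} by exhibiting the family $K(p,q)$ as an explicit witness, and then verifying the five structural claims plus the HOMFLY-PT and Alexander-module claims in turn. Concretely, I would partition the family into the one-parameter slices $S_n = \{K(p,q) : p+q = n\}$; within each slice I must show the HOMFLY-PT polynomial is constant while the Alexander module is not, and I must also show that the slices are pairwise disjoint (which already follows once the Alexander modules inside a single slice are all different, provided the modules also depend on $n$). Iterating over $n \in \mathbb{Z}$ then produces \emph{infinitely many} such slices, each containing \emph{infinitely many} distinct knots with a common HOMFLY-PT polynomial.

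For the HOMFLY-PT equality within $S_n$, the approach is a local skein argument. From the diagram in Figure~\ref{figure13} I would excise the two twist boxes, leaving a fixed $4$-tangle $T$ (depending only on the ``outer'' part of the picture). Expressing the twist boxes in the HOMFLY skein of the $2$-string tangle on a basis $\{\mathbf{1}, \sigma\}$ (or rather the Hecke-algebra basis adapted to two strands), the HOMFLY-PT polynomial of $K(p,q)$ becomes a bilinear expression in the coefficients for the $p$-box and the $q$-box. The key geometric input is that $T$ closes the two twist regions in a symmetric manner (essentially a $\pi$-rotation swaps the two boxes), which forces all mixed terms to appear symmetrically in $p$ and $q$ and the pure terms to depend only on $p+q$ through the eigenvalues of a single half-twist acting on the two-dimensional Hecke quotient. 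This reduces the HOMFLY-PT polynomial to a function of $p+q$ alone. I expect this step to be the main obstacle, because one must identify the correct symmetry of $T$ and verify that the skein coefficients collapse to a sum-only dependence; doing this cleanly may require choosing a good closure or working with an auxiliary cabling.

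For the Alexander module, I would fix a slice $S_n$ and compute the first elementary ideal (or directly the $\mathbb{Z}[t,t^{-1}]$-module presentation) from a Wirtinger presentation or from a Seifert matrix obtained by Seifert's algorithm on the standard diagram. Since the Alexander \emph{polynomial} will end up equal for all $K(p,q)\in S_n$ (it is determined by the HOMFLY-PT specialisation), the module must be non-cyclic, and the distinguishing invariant will be a higher elementary ideal or, equivalently, the $\mathbb{Z}[t,t^{-1}]$-isomorphism type of the first homology of the infinite cyclic cover. A direct calculation should show that this module has a presentation whose elementary divisors separate the values of $(p,q)$ modulo the slice symmetry.

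Finally, the geometric properties are handled separately and relatively independently: ribbonness by exhibiting a pair of ribbon disks visible in the diagram (the two ``clasps'' can be resolved by band moves); genus at most $2$ from a Seifert surface produced by Seifert's algorithm, with a lower bound from the Alexander polynomial's degree (which also gives fiberedness once the leading coefficient is shown to be $\pm 1$); $3$-bridgeness by putting the diagram into a $3$-bridge position and excluding $2$-bridge via the Alexander module; and hyperbolicity by showing the knot complement admits no essential sphere, disk, torus, or annulus, for example by verifying that the standard diagram is prime and alternating-like enough to apply Menasco's criterion, or by a direct SnapPea-style verification of a complete hyperbolic structure. Combining all of these over the slices $S_n$, $n \in \mathbb{Z}$, delivers the theorem.
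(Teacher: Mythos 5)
First, a point of comparison: the paper does not prove Theorem~\ref{main} at all --- it is quoted verbatim from Kanenobu \cite{Ka1} as motivation --- so your attempt can only be measured against Kanenobu's original argument, whose overall skeleton (fix $p+q$, show the HOMFLY-PT polynomial is constant on each slice, distinguish the knots in a slice by their Alexander modules, then check the geometric properties) your outline does reproduce. The problem is that the two steps carrying all the weight are not actually established, and the mechanisms you propose for them would fail as stated. For the skein step, a $\pi$-rotation interchanging the two twist boxes can only prove invariance under $(p,q)\mapsto(q,p)$, i.e.\ dependence on the unordered pair $\{p,q\}$ --- equivalently on $p+q$ \emph{and} $pq$ --- and cannot by itself collapse the answer to a function of $p+q$; the mixed terms $\lambda^{p}\mu^{q}$ in your bilinear expansion have no reason to cancel, which is precisely the point you flag and leave unresolved. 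Moreover the strands in each box of Figure~\ref{figure13} are antiparallel, so the ``eigenvalues of a half-twist on the two-dimensional Hecke quotient'' picture (a parallel-strand statement) is not even available. What actually works is the observation that the oriented smoothing of a crossing inside a box cuts the band, after which the remaining twists of that box vanish by Reidemeister~I moves; the skein relation then gives a recursion in $p$ (and one in $q$) with inhomogeneous term independent of the twist parameter, and solving these (or quoting the explicit formulas in \cite{Ka2}) is what yields dependence on $p+q$ alone.

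Second, the Alexander-module step --- the real content of the theorem --- is only asserted (``a direct calculation should show\dots''). Without an explicit presentation of the first homology of the infinite cyclic cover of $K(p,q)$ and a proof that these modules are pairwise non-isomorphic for distinct unordered pairs with fixed sum, there is no theorem; this is where Kanenobu does the actual work. Several auxiliary arguments are also unsound: a monic Alexander polynomial is necessary but not sufficient for fiberedness, so your route to fiberedness (and hence to the genus computation, which you tie to it) does not go through --- Kanenobu instead gets fiberedness and genus $2$ geometrically, from the fact that $K(p,q)$ is obtained from the fibered knot $K(0,0)=4_1\#4_1$ by twisting along curves sitting on its genus-$2$ fiber. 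Likewise, a Menasco-type criterion for hyperbolicity needs alternating diagrams, which these knots do not admit (indeed the present paper proves most $K(p,q)$ are non-alternating), and a ``SnapPea-style verification'' can only ever treat finitely many knots, not the whole two-parameter family. The $3$-bridge argument (non-cyclic Alexander module forces bridge number at least $3$, plus an explicit $3$-bridge position) is fine, but it too is contingent on the module computation you have not carried out.
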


We summarize the main properties of Kanenobu knots that will be
needed in this paper as follows:

\begin{prop}[\cite{Ka1,Ka2}]\label{properties} For the above Kanenobu knots, we
have
\begin{enumerate}
\item $K(p_1, q_1) \approx K(p_2, q_2)$ iff $(p_1, q_1) = (p_2, q_2)$ or $(p_1, q_1) = (q_2, p_2)$.
\item $K'(p, q) \approx K(-p, -q)$, where $K'$ denotes the mirror image of $K$.
\item $K(p, q)$ is prime except for $K(0, 0)$, where $K(0, 0) \approx 4_1 \# 4_1$.
\item $V(p, q)  = (-t)^{p + q} (V(0, 0) - 1) + 1 = (-t)^{p + q} ((t^{-2} - t^{-1} + 1 - t + t^2)^2 - 1) + 1)$.
\end{enumerate}
\end{prop}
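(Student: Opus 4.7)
The plan is to treat each of the four parts separately, noting that all of them are proved in detail in \cite{Ka1,Ka2}; here I sketch the main ingredients one would verify or cite.

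Parts (2) and the ``if'' direction of (1) are purely diagrammatic. Reflecting Figure \ref{figure13} through the plane of the page reverses every crossing and hence negates both twist parameters, giving $K'(p,q)\approx K(-p,-q)$. The obvious $180^\circ$ rotational symmetry of the same figure interchanges the two twist regions and yields $K(p,q)\approx K(q,p)$, which accounts for the case $(p_1,q_1)=(q_2,p_2)$ of (1).

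For part (3), the plan is to invoke Theorem \ref{main}: each $K(p,q)$ with $(p,q)\neq(0,0)$ is hyperbolic, and a hyperbolic knot is necessarily prime, since a non-trivial connected sum admits an essential torus (the swallow-follow torus) in its complement, contradicting hyperbolicity. The identification $K(0,0)\approx 4_1\#4_1$ is read off directly from Figure \ref{figure13}: when both twist regions carry zero half-twists, the diagram visibly separates into two figure-eight summands joined by a trivial arc.

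For part (4), I would apply the Jones skein relation at a single crossing inside the $p$-twist region. One resolution reduces the number of half-twists by one (at the cost of a monomial factor in $t$), while the other splits a strand off cleanly and produces a controlled lower-order contribution. Inducting on $|p|+|q|$ extracts the factor $(-t)^{p+q}$ and reduces the computation to $V(0,0)=V(4_1\#4_1)=V(4_1)^2$, from which the closed form $(t^{-2}-t^{-1}+1-t+t^2)^2$ follows by multiplicativity of the Jones polynomial under connected sum.

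The main obstacle, and the only non-routine step, is the ``only if'' direction of (1): since all Kanenobu knots share the same HOMFLY-PT and Jones polynomials, these invariants cannot distinguish inequivalent pairs $(p,q)$. The plan --- which is the content of \cite{Ka1,Ka2} --- is to compute the Alexander module of $K(p,q)$ as a module over $\mathbb{Z}[t,t^{-1}]$, exhibit an explicit presentation matrix depending on $(p,q)$, and show that its isomorphism class records exactly the unordered pair $\{p,q\}$. Producing the presentation and extracting a module-theoretic invariant that separates distinct pairs is the delicate part that one must simply cite.
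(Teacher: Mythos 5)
The paper offers no proof of Proposition \ref{properties}: it is quoted as background directly from \cite{Ka1,Ka2}, so your citation-plus-sketch takes essentially the same approach. Your outline of the diagrammatic symmetries, the primality argument, the induction giving $V(p,q)=(-t)^{p+q}(V(0,0)-1)+1$ with $V(0,0)=V(4_1)^2$, and your identification of the ``only if'' direction of (1) (distinguishing knots with equal HOMFLY-PT and Jones polynomials via their Alexander module structures) as the one step that must simply be cited are all consistent with what those references do --- the only small quibble being that in the oriented skein triple it is the crossing change, not the smoothing, that alters the twist count (by two), so the step-one recursion you describe is more naturally run with the Kauffman bracket resolutions, exactly as this paper later does for Khovanov homology.
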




In this paper, we determine the rational Khovanov bigraded homology
groups of Kanenobu knots that first appeared in \cite{Ka1,Ka2} and
this confirms the result of Theorem 7 in \cite{GW}. Based on this
result, we show that all Kanenobu knots are homologically
thin. 
Also, we determine the crossing number for all Kanenobu knots
$K(p,q)$ with $pq > 0$ or $|pq|\leq \max \{|p|, |q|\}$ using the
inequality appears in [Theorem 1,\cite{Ki}]. Finally, we conjecture
that the crossing number for the Kanenobu knot $K(p,q)$ is $|p| +
|q| + 8$ with $pq < 0$ and $|pq|
> \max \{|p|, |q|\}$.



\section{Background and Notations}

In this section, we give the basic notations and tools for this
paper.

\subsection{The Jones Polynomial}









\begin{defn} 
The Jones polynomial of a link $L$ is an invariant of the
equivalence class of the oriented link $L$ that is characterized as
follows:

\begin{enumerate}
\item $V(O) = 1$, where $O$ is the unknot.
\item Let $L_+$, $L_-$, and $L_0$ be three oriented link
diagrams that are identical except at a small region as shown in
figure \ref{figure11}, then the Jones polynomial satisfies the
following skein relation:
\begin{align}
t^{-1}V(L_+) - t V(L_-) = (t^\frac{1}{2} - t^\frac{-1}{2})V(L_0).
\end{align}
\end{enumerate}

    \begin{figure}[h]
\centering
\includegraphics[width=7cm,height=2.2cm]{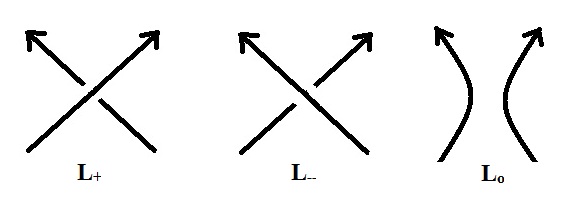}
\caption{$L_+$, $L_-$, and $L_0$, respectively.} \label{figure11}
\end{figure}
\end{defn}

We mention some facts regarding the Jones polynomial that will be
used later.

\begin{prop}
Let $L'$ be the mirror image of the link $L$, then $V_L(t)$ =
$V_{L'}(t^{-1})$.
\end{prop}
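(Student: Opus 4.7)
The plan is to invoke the uniqueness of the Jones polynomial: I would show that the function $W_L(t) := V_{L'}(t^{-1})$ satisfies the two defining properties of $V_L(t)$, and conclude $W_L(t) = V_L(t)$, which is the claim.

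The normalization $W_O(t) = 1$ is immediate, since the unknot is its own mirror image and $V_O = 1$. For the skein relation, the key observation is that forming the mirror image of an oriented diagram flips every crossing while leaving oriented $0$-smoothings unchanged. Consequently, if $L_+, L_-, L_0$ form a skein triple as in Figure \ref{figure11}, then their mirrors $L_+', L_-', L_0'$ again form a skein triple, but with the roles of $L_+$ and $L_-$ interchanged: $L_-'$ presents the positive crossing, $L_+'$ presents the negative one, and $L_0'$ presents the smoothing. Writing the Jones skein for this mirrored triple and then substituting $t\mapsto t^{-1}$ yields
\begin{align*}
t\, V_{L_-'}(t^{-1}) - t^{-1} V_{L_+'}(t^{-1}) = -(t^{1/2}-t^{-1/2})\, V_{L_0'}(t^{-1}),
\end{align*}
where the minus sign on the right comes from the fact that the substitution $t \mapsto t^{-1}$ sends $t^{1/2}-t^{-1/2}$ to its negative. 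Translating to $W$ and multiplying through by $-1$ recovers precisely
\begin{align*}
t^{-1} W_{L_+}(t) - t\, W_{L_-}(t) = (t^{1/2}-t^{-1/2})\, W_{L_0}(t),
\end{align*}
which is the defining skein relation.

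The only real subtlety is the coordinated double sign flip: one sign change arises from the fact that mirroring swaps the roles of $L_+$ and $L_-$, and a second arises from the substitution $t\mapsto t^{-1}$ in the skein variable. These conspire to reproduce exactly the original skein relation rather than a modified one, which is precisely why the Jones polynomial of the mirror is obtained by the simple substitution $t \mapsto t^{-1}$. Once this bookkeeping is carried out, the uniqueness of a function satisfying the two axioms forces $W_L = V_L$, giving $V_L(t) = V_{L'}(t^{-1})$.
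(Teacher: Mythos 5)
Your proposal is correct. The paper itself states this proposition without proof, treating it as a standard property of the Jones polynomial, so there is no in-paper argument to compare against; your argument is the usual textbook one. The key points are all handled properly: $W_L(t):=V_{L'}(t^{-1})$ is a well-defined invariant of oriented links because mirroring respects link equivalence, the unknot normalization is trivial, and the sign bookkeeping is exactly right --- mirroring swaps the roles of $L_+$ and $L_-$ in the skein triple while fixing the oriented smoothing, and the substitution $t\mapsto t^{-1}$ sends $t^{1/2}-t^{-1/2}$ to its negative, so the two sign changes cancel and $W$ satisfies the defining skein relation verbatim. Invoking the uniqueness implicit in the paper's characterization of $V$ (the skein relation together with $V(O)=1$ determines the invariant) then gives $V_L(t)=V_{L'}(t^{-1})$ as claimed.
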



\begin{thm} [Theorem 1 \& 2 \& Corollary 1, \cite{T}]\label{T}
Let $D$ be a connected diagram of $n$ crossings of an oriented link
$L$, and $\Br(V)$ denotes the breadth of the Jones polynomial
$V_L(t)$ of a link $L$ that is the difference between the maximal
and the minimal exponents of $t$, then:
\begin{enumerate}
\item If $D$ is alternating and reduced, then $\Br(V) = n$.
\item If $D$ is non-alternating and prime, then $\Br(V) < n$.
\item If the link $L$ admits a reduced alternating diagram with $n$
crossings, then it can not be projected with fewer than $n$
crossings.
\end{enumerate}
\end{thm}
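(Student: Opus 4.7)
The plan is to attack all three parts via the Kauffman bracket $\langle D\rangle$, since the Jones polynomial is $V_L(t)=(-A)^{-3w(D)}\langle D\rangle$ under the substitution $A=t^{-1/4}$, so that $\Br(V_L)=\tfrac{1}{4}\bigl(\max\deg_A\langle D\rangle-\min\deg_A\langle D\rangle\bigr)$. The principal tool is the state-sum expansion
\[
\langle D\rangle=\sum_{s} A^{a(s)-b(s)}\bigl(-A^{2}-A^{-2}\bigr)^{|s|-1},
\]
where $s$ ranges over the $2^n$ assignments of $A$- or $B$-smoothing to each crossing, $a(s)$ and $b(s)$ count each type, and $|s|$ is the number of resulting loops. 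An immediate degree count gives $\max\deg_A\langle D\rangle\le n+2(s_A-1)$, realized by the all-$A$ state, and $\min\deg_A\langle D\rangle\ge -n-2(s_B-1)$, from the all-$B$ state, yielding the universal bound $\Br(V_L)\le\tfrac{1}{2}(n+s_A+s_B-2)$.

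For part (1), I would prove $\Br(V_L)=n$ by establishing two things for a reduced alternating diagram: first, the combinatorial identity $s_A+s_B=n+2$, which follows from the fact that for an alternating $D$ the all-$A$ and all-$B$ smoothings reproduce exactly the two checkerboard region systems of the shadow, together with Euler's formula applied to the 4-regular planar graph of crossings; second, that the extremal monomials in the state sum are not cancelled by any contribution from an adjacent state. Reducedness plus alternation together force that changing any single smoothing from the all-$A$ state strictly decreases $|s|$, so the leading monomial is achieved uniquely, and symmetrically for the trailing one. Plugging these into the bound above gives the equality.

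For part (2), I would use the same framework to show that a non-alternating prime diagram satisfies $s_A+s_B<n+2$, giving the strict inequality. The idea is to exhibit a crossing where the failure of alternation forces, in either the all-$A$ or all-$B$ state, a loop that visits the crossing twice; the planar graph count then drops by at least one, breaking the Euler identity. Primeness is needed precisely to exclude local reductions (such as nugatory crossings or visible connect summands) that would allow the diagram to secretly saturate the bound while looking non-alternating. I expect this combinatorial step — certifying that prime non-alternating diagrams cannot simultaneously be $+$- and $-$-adequate in the sense above — to be the main obstacle, as it needs a careful case analysis of how non-alternation interacts with the shadow's planar structure rather than a single clean calculation.

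Part (3) is then essentially immediate from (1) and (2), together with the additivity of both $\Br(V)$ and the crossing number under connect sum. If $L$ has a reduced alternating diagram $D$ with $n$ crossings, part (1) gives $\Br(V_L)=n$. For any other diagram $D'$ of $L$ with $n'$ crossings, decompose $D'$ into prime summands, apply (1) or (2) to each summand, and add to obtain $\Br(V_L)\le n'$. Hence $n\le n'$, so $L$ cannot be projected with fewer than $n$ crossings.
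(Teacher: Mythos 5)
You should first note that the paper does not prove this statement at all: it is imported verbatim from Thistlethwaite \cite{T}, whose own argument runs through the spanning-tree (Tutte polynomial) expansion of the Jones polynomial, so there is no in-paper proof to match. Your Kauffman state-sum route is the standard alternative, and your treatment of part (1) and of the degree bounds is essentially right: for a reduced alternating diagram the all-$A$ and all-$B$ states are the two checkerboard systems, Euler's formula gives $s_A+s_B=n+2$, and reducedness gives adequacy, so the two extreme monomials survive and $\Br(V)=n$. Part (3) also goes through, but two remarks: you do not need (and must not rely on) additivity of the crossing number under connected sum, which is an open problem in general --- your argument only uses multiplicativity of $V$ under connected sum; and a prime summand of $D'$ may be alternating but non-reduced, a case covered by neither (1) nor (2) as stated (it is easily handled, or avoided entirely by proving Kauffman's universal bound $s_A+s_B\le n+2$ for every connected diagram, which already gives $\Br(V)\le n'$ without any prime decomposition).

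The genuine gap is part (2), and the mechanism you sketch for it cannot be repaired as stated. You want ``prime, connected, non-alternating $\Rightarrow s_A+s_B<n+2$'', which is indeed the correct key lemma, but your proposed local argument fails at both steps. First, failure of alternation at a crossing does not force a state loop to pass twice through some crossing: take two reduced alternating diagrams and splice them into a connected-sum diagram joined ``the wrong way'', so that the result is non-alternating; this diagram is still both $A$- and $B$-adequate and satisfies $s_A+s_B=(n_1+2)+(n_2+2)-2=n+2$, which shows the obstruction is global (this is exactly where primeness enters) and not visible at any single ``bad'' crossing. Second, even when a loop does visit a crossing twice (inadequacy), the count need not drop: an alternating diagram with a nugatory crossing is inadequate at that crossing yet still has $s_A+s_B=n+2$, so ``loop through a crossing twice $\Rightarrow$ Euler identity breaks'' is false. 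What is actually required is the global statement that equality $s_A+s_B=n+2$ forces the diagram to be a connected sum ($*$-product) of alternating diagrams (equivalently, that its Turaev surface is a sphere), so that a prime non-alternating diagram has $s_A+s_B\le n+1$ and hence $\Br(V)<n$; this is the hard combinatorial core of the Kauffman--Murasugi--Thistlethwaite theorem, proved by Murasugi by induction and by Thistlethwaite via the Tutte polynomial, and your proposal does not supply it.
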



\subsection{The Kauffman Polynomial}






We first define the writhe of any link diagram before we define the
Kauffman polynomial.

\begin{defn}
The writhe of a diagram $D$ of an oriented link $L$, denoted by
$w(D)$, is the difference between the number of positive crossings
and the number of negative crossings of $D$, where positive and
negative crossings are defined according to figure \ref{figure1}.
\end{defn}

The Kauffman polynomial $F_L(a, x)$ is a two variable link
polynomial of an oriented link $L$ which is defined on a link
diagram $D$ of $L$ as follows:

\begin{align}
F_L(a, x) = a^{-w(D)} \Lambda_D(a, x),
\end{align}

where $\Lambda_D(a, x)$ is a polynomial in $a$ and $x$ defined on a
diagram $D$ of an unoriented link $L$ by the following properties:

\begin{enumerate}
\item $\Lambda(O)$ = 1, where $O$ is the unknot.
\item $\Lambda(s_r)$ = $a\Lambda(s)$, where $s_r$ is the strand $s$ with a positive curl.
\item $\Lambda(s_l)$ = $a^{-1}\Lambda(s)$, where $s_l$ is the strand $s$ with a negative curl.
\item $\Lambda$ is unchanged under Reidemeister moves of types II and III.
\item Let $L_-$, $L_+$, $L_\infty$ and $L_0$ are identical diagrams except at a small
region where they are as shown in figure \ref{figure8}, then
$\Lambda$ satisfies  Kauffman's skein relation:
\begin{align}
\Lambda(L_-) + \Lambda(L_+) = x (\Lambda(L_\infty) + \Lambda(L_0)).
\end{align}

\end{enumerate}

\begin{figure}[h]
\centering
\includegraphics[width=10cm,height=2.5cm]{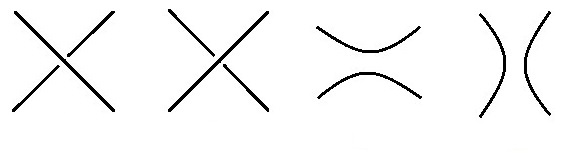}
\caption{$L_-$, $L_+$, $L_\infty$ and $L_0$, respectively.}
\label{figure8}
\end{figure}

For a link $L$, the $Q$ polynomial is related to the Kauffman
polynomial by the following relation:
\begin{align}
 Q_{L}(x) = F_{L}(1, x).
\end{align}

\begin{prop}[Property 2, \cite{BLM}]
The $Q$ polynomial satisfies the following properties:
\begin{enumerate}
    \item $Q(L_1 \# L_2)$ = $Q(L_1)$ $Q(L_2)$, where $L_1 \# L_2$ is the connected sum of $L_1$ and $L_2$.
    \item $Q(L')$ = $Q(L)$, where $L'$ is the mirror image of $L$.
\end{enumerate}
\end{prop}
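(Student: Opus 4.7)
The natural route is to deduce both identities for $Q$ from corresponding statements for the full Kauffman polynomial $F_L(a,x)$ and then specialize at $a=1$.

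For part (2), note that setting $a=1$ makes the writhe normalization $a^{-w(D)}$ trivial and collapses the two curl rules $\Lambda(s_r)=a\Lambda(s)$ and $\Lambda(s_l)=a^{-1}\Lambda(s)$ into the single identity $\Lambda(s_r)=\Lambda(s_l)=\Lambda(s)$. Because the skein relation $\Lambda(L_-)+\Lambda(L_+) = x\bigl(\Lambda(L_\infty)+\Lambda(L_0)\bigr)$ is already symmetric under $L_+ \leftrightarrow L_-$, the defining rules for $\Lambda_D(1,x)$ are insensitive to the signs of the crossings of $D$. Since the mirror diagram $D'$ differs from $D$ only by switching all crossings, this at once yields $\Lambda_{D'}(1,x)=\Lambda_D(1,x)$, and hence $Q(L')=Q(L)$.

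For part (1), the writhe is additive under connected sum of oriented diagrams, so it suffices to prove the multiplicativity $\Lambda(D_1 \# D_2) = \Lambda(D_1)\Lambda(D_2)$ on a connect-sum diagram. I would induct on the crossing number of $D_2$. The base case has $D_2$ a simple closed curve with $k_+$ positive and $k_-$ negative curls; the curl rules applied to $D_1 \# D_2$ pull out a factor $a^{k_+-k_-}\Lambda(D_1)$, while applied to $D_2$ alone they give $\Lambda(D_2)=a^{k_+-k_-}$, so the two sides agree. For the inductive step, choose a crossing $c$ of $D_2$ (disjoint from the sum region) and apply the skein relation; since the four resolutions of $D_1 \# D_2$ at $c$ are just $D_1$ connect-summed with the corresponding four resolutions of $D_2$, the skein identity for $D_1 \# D_2$ matches the one for $D_2$ multiplied by $\Lambda(D_1)$, and the induction closes. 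Setting $a=1$ gives $Q(L_1 \# L_2)=Q(L_1)Q(L_2)$.

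The main technical issue is organizing the induction for (1) cleanly: the skein relation reduces crossing count only on the two smoothings $L_\infty,L_0$, whereas it merely switches a sign between $L_+$ and $L_-$. The standard remedy is a lexicographic induction on the pair (crossing number, number of crossings still needing a sign switch to reach an \emph{ascending} and hence trivial diagram); the extra bookkeeping is that at every step the connect-sum arc must be kept disjoint from the crossing being resolved, which is immediate since we have freedom in choosing where to perform the sum.
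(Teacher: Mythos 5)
The paper itself gives no proof of this proposition: it is imported wholesale from \cite{BLM} (it is even labelled ``Property 2''), so there is nothing internal to compare your argument with, and it has to be judged on its own terms. Your part (2) is essentially right: at $a=1$ the defining rules for $\Lambda$ (normalization, the two curl rules, invariance under RII/RIII, and the skein relation, which is symmetric in $L_+$ and $L_-$) are carried into themselves when every diagram is replaced by its mirror, so $D\mapsto\Lambda_{D'}(1,x)$ satisfies the same axioms as $D\mapsto\Lambda_{D}(1,x)$. You should, however, say explicitly that you are invoking the uniqueness (well-definedness) of the polynomial determined by those axioms -- ``at once yields'' hides exactly that step -- or sidestep it by quoting the standard identity $F_{L'}(a,x)=F_{L}(a^{-1},x)$ and setting $a=1$.

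Part (1) has a genuine gap: the base case you state belongs to the naive induction on crossing number, not to the lexicographic induction you correctly recognize is needed. In the lexicographic scheme the inner base case is an \emph{ascending} diagram $D_2$, which in general has many crossings; ``ascending hence trivial'' only says the underlying link is an unlink, and since $\Lambda$ is not invariant under RI you cannot simply erase that summand or pretend it is a circle with curls. You still must prove $\Lambda(D_1\#D_2)=\Lambda(D_1)\Lambda(D_2)$ for ascending $D_2$, for instance by undoing the ascending $1$--$1$ tangle inside its disk by RII/RIII moves and curl removals, checking that both sides acquire the same factor $a^{w(D_2)}$, together with one factor $(a+a^{-1})x^{-1}-1$ for each split unknotted component that remains. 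This bookkeeping is unavoidable in any case, because the smoothings $L_0,L_\infty$ in your inductive step change the number of components, so the whole induction must be formulated for links and for connected sum along a specified component. With that base case supplied (and the sum band kept away from the crossing being resolved, as you note), your skein step does close the induction and the argument becomes the standard one from the literature.
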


\subsection{Khovanov Invariant}
We use the rational version of this link invariant that was first
introduced by Khovanov in \cite{K1} by setting $ c = 0$ and
tensoring with $\mathbb{Q}$.

For a diagram $D$ of the oriented link $L$, we denote the
isomorphism classes of $\mathcal{H}^{i,j}(D)$ by
$\mathcal{H}^{i,j}(L)$. We denote $\overline{\mathcal{H}}^{i}(D)$,
and $\mathcal{H}^{i}(D)$ to be the $i-$th homology group of the
complex $\overline{\mathcal{C}}(D)$, and $\mathcal{C}(D)$
respectively. Also, we denote $\overline{\mathcal{H}}^{i,j}(D)$, and
$\mathcal{H}^{i,j}(D)$ to be the $j-$th graded component of
$\overline{\mathcal{H}}^{i}(D)$, and $\mathcal{H}^{i}(D)$
respectively. Therefore, we have
\[
\overline{\mathcal{H}}^{i}(D) = \oplus_{j\in
\mathbb{Z}}\overline{\mathcal{H}}^{i,j}(D), \quad \text{and} \quad
\mathcal{H}^{i}(D) = \oplus_{j\in \mathbb{Z}}\mathcal{H}^{i,j}(D).
\]

The homology groups $\overline{\mathcal{H}}(D)$ and $\mathcal{H}(D)$
are related as follows:
\begin{equation}\label{relation}
\mathcal{H}^{i, j}(D) = \overline{\mathcal{H}}^{i + x(D), j + 2x(D)
- y(D)}(D),
\end{equation}
where $x(D)$ and $y(D)$ are the number of crossings of the form of
$L_{-}$ and $L_{+}$ respectively as shown in figure \ref{figure11}.


This invariant is a cohomology theory whose graded Euler
characteristic is equal to the normalized Jones polynomial given in
equation \ref{main}. Moreover, it is more powerful than the
normalized Jones polynomial in many cases.
\begin{equation}\label{main}
\sum_{i,j\in\mathbb{Z}}(-1)^{i}q^{j}\dim\mathcal{H}^{i,j}(L) =
(q^{-1} + q)V(L)_{\sqrt{t} = -q}.
\end{equation}

\subsection{Exact Sequence}
We denote $D(*0)$ and $D(*1)$ by the diagrams obtained by applying
the 0-resolution and the 1-resolution at some fixed crossing of the
link diagram $D$ as shown in the figure \ref{figure2}.

\begin{figure}[h]
  \centering
  \includegraphics[width=6cm]{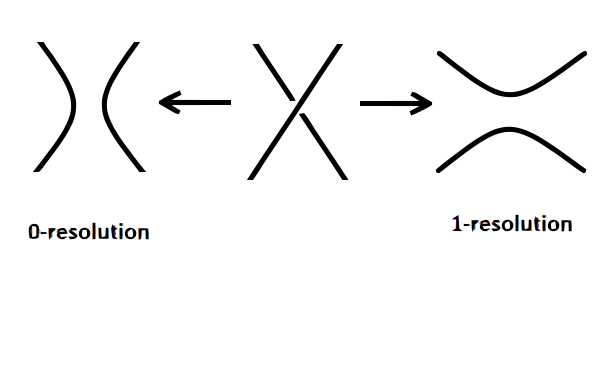}
\vspace{-1.5cm}
  \caption{The two resolutions of a link diagram at some crossing}\label{figure2}
\end{figure}

It is clear that $\mathcal{C}(D(*0))$ and
$\mathcal{C}(D(*1))[-1]\{-1\}$ are subcomplexes of $\mathcal{C}(D)$
and form a short exact sequence
\[
0 \rightarrow \mathcal{C}(D(*1))[-1]\{-1\} \rightarrow
\mathcal{C}(D) \rightarrow \mathcal{C}(D(*0)) \rightarrow 0,
\]
with degree preserving maps. Therefore, this induces a long exact
sequence on homology:
\begin{align}\label{sequence}
\cdots\rightarrow \overline{\mathcal{H}}^{i-1,j}(D(*0))
\stackrel{\delta}{\rightarrow}
\overline{\mathcal{H}}^{i-1,j-1}(D(*1))\rightarrow
\overline{\mathcal{H}}^{i,j}(D)\rightarrow
\overline{\mathcal{H}}^{i,j}(D(*0))\stackrel{\delta}{\rightarrow}
\overline{\mathcal{H}}^{i,j-1}(D(*1))\rightarrow \cdots
\end{align}

\subsection{Lee Invariant}
The Lee invariant is a variant of the rational Khovanov homology
obtained from the same underlying complex, but with different
differential. We let $H^{i}_{Lee}(L)$ to denote the $i-$th homology
group of the complex $\mathcal{C}(D)$ using Lee's differential. The
results needed from Lee's invariant in this paper is summarized by
\begin{prop}[Proposition 3.3, \cite{L}]\label{lee}
Let $L$ be an oriented link with $k$ components $L_{1},L_{2}, \ldots
,L_{k}$. Then
\begin{enumerate}
\item $\dim(H_{Lee}(L)) = 2^{k}$.
\item For every orientation $\theta$ of L there is a generator of homology in degree
\[
2 \times \sum_{l\in E, m\in \bar{E}}lk(L_{l},L_{m})
\]
where $E \subset \{1, 2, \ldots , k\}$ indexes the set of components
of $L$ whose original orientation needs to be reversed to get the
orientation $\theta$ and $\bar{E} = \{1,\ldots, k\}\setminus E$. The
linking numbers $lk(L_{l},L_{m})$ are the linking number (for the
original orientation) between component $L_{l}$ and $L_{m}$.
\item There is a spectral sequence converging to $H_{Lee}(L)$ with
$E_{1}^{s,t} = \mathcal{H}^{s+t,4s}(L)$.
\end{enumerate}
\end{prop}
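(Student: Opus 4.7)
The plan is to follow Lee's original approach: replace Khovanov's Frobenius algebra $\mathbb{Q}[X]/(X^{2})$ by $A_{Lee} = \mathbb{Q}[X]/(X^{2}-1)$ and form the same cube of resolutions as Khovanov, obtaining a new differential $d_{Lee} = d_{Kh} + \Phi$ where $\Phi$ raises quantum degree by $4$. Part (3) is then built into the construction: the quantum-degree filtration on $\mathcal{C}(D)$ is preserved (only increased) by $d_{Lee}$, so one obtains a spectral sequence whose $E_{0}$-differential is $d_{Kh}$ and whose $E_{1}$-page is Khovanov homology. The indexing $E_{1}^{s,t} = \mathcal{H}^{s+t,4s}(L)$ is dictated by how the filtration index $s$ (a quarter of the quantum degree, since $\Phi$ jumps by $4$) combines with the homological degree $s+t$.

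For part (1), I would diagonalize $A_{Lee}$ over $\mathbb{Q}$ via the idempotents $a = (1+X)/2$ and $b = (1-X)/2$, so that $A_{Lee} \cong \mathbb{Q}\langle a\rangle \oplus \mathbb{Q}\langle b\rangle$ as algebras. In this basis the Lee multiplication and comultiplication split cleanly, and for each orientation $\theta$ of $L$ one writes down a canonical cycle $\mathfrak{s}_{\theta}$: take the oriented resolution at every crossing (using $\theta$), then label each resulting circle by $a$ or $b$ according to a checkerboard rule on the Seifert circles. A direct verification shows the $2^{k}$ elements $\{\mathfrak{s}_{\theta}\}$ are $d_{Lee}$-cycles, and Lee's inductive filtration argument shows that they descend to a basis of $H_{Lee}(L)$, giving both the dimension count in (1) and concrete representatives for (2).

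Part (2) is then a bookkeeping computation. The cycle $\mathfrak{s}_{\theta}$ lives at the cube-vertex corresponding to the $\theta$-oriented resolution; compared with the original orientation, this vertex differs in coordinate precisely at those crossings between a component indexed by $E$ and a component indexed by $\bar{E}$, whereas crossings between two reoriented components or two non-reoriented components keep their resolution. Writing the resulting shift in homological degree as a sum of $\pm 1$ over such mixed crossings and grouping by sign recovers exactly $2\sum_{l\in E,\, m\in \bar{E}}\mathrm{lk}(L_{l},L_{m})$. The main obstacle is really the spanning/independence claim in part (1): showing that the $2^{k}$ canonical cycles are linearly independent and generate $H_{Lee}(L)$ rather than merely lying in it. I would handle this the way Lee does, by inducting on the number of crossings and squeezing $\dim H_{Lee}(L)$ down to $2^{k}$ using the spectral sequence of (3) together with the standard bounds on $\dim \mathcal{H}(L)$.
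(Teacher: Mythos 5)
The paper does not prove this proposition at all; it is quoted directly from Lee ([L], Proposition 3.3), and your sketch reconstructs Lee's original argument — the deformed algebra $\mathbb{Q}[X]/(X^{2}-1)$ with $d_{Lee}=d_{Kh}+\Phi$, the canonical cycles attached to orientations, the linking-number computation of their homological degrees, and the spectral sequence induced by the quantum filtration — so it is correct in outline and follows essentially the same route as the cited source. The only point worth flagging is the one you already identify: the real content is that the $2^{k}$ canonical classes span $H_{Lee}(L)$, which Lee establishes by decomposing the complex along the idempotents $(1\pm X)/2$ and showing every summand not indexed by an orientation is acyclic, rather than solely by the crossing-number induction and dimension squeeze you propose; either completion is legitimate.
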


\subsection{Rasmussen $s$-invariant}
In \cite{R} Rasmussen defines the $s$-invariant as the average of
the $j$-gradings of the two copies of $\mathbb{Q}$ in the
$H^{i}_{Lee}(K)$ which is the $E_{\infty}$ page of the above
spectral sequence for the knot $K$. He shows that the two
$j$-gradings differ by 2. Moreover, he shows that this invariant
satisfies the following inequality:
\begin{equation}\label{s}
\left|s(K)\right| \leq 2g^{*}(K),
\end{equation}
where $g^{*}(K)$ is the slice genus of the knot $K$.


\section{The crossing number of Kanenobu knots}


In this section, we establish the first goal of this paper that is
evaluating the crossing number of Kanenobu knots $K(p,q)$ with $pq >
0$ or $|pq|\leq \max \{|p|, |q|\}$. In the case when $pq < 0$ and
$|pq| > \max \{|p|, |q|\}$, we conjecture that the crossing number
is $|p| + |q| + 8$. But first, we state and prove some results
needed for the proof of the main theorem.

\begin{prop}
Let $\Br(V(p, q))$ denote the breadth of the Jones polynomial of
Kanenobu knot $K(p, q)$. Then,
\[
\Br(V(p, q)) =
  \begin{cases}
    { |p| + |q| + 4}, & \text{if $|p + q|  > 4$},
    \\
  {8}, & \text{if $|p + q| \leq 4$}.
  \end{cases}
\]
\end{prop}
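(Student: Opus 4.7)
The plan is to work directly with the explicit formula from Proposition~\ref{properties}(4),
\[
V(p,q) = (-t)^{p+q}\bigl(P(t) - 1\bigr) + 1, \qquad P(t) := \bigl(t^{-2}-t^{-1}+1-t+t^2\bigr)^2,
\]
and just read off the span of the exponents of $t$ from the resulting Laurent polynomial.

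First I would expand $P(t)$ by direct multiplication to obtain
\[
P(t) = t^{-4} - 2t^{-3} + 3t^{-2} - 4t^{-1} + 5 - 4t + 3t^2 - 2t^3 + t^4,
\]
so every integer exponent $i$ with $-4\le i\le 4$ appears in $P(t)-1$ with a nonzero coefficient $a_i$, and these coefficients alternate in sign, giving the convenient identity $(-1)^i a_i = |a_i|\ge 1$. Writing $n := p+q$, the summand $(-t)^n(P(t)-1)=(-1)^n t^n (P(t)-1)$ is then supported on $[n-4,\,n+4]$ with nonzero extreme coefficients $\pm 1$, and the added constant $+1$ affects only the coefficient of $t^0$.

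The argument then splits on whether $0$ lies in the support. If $|n|>4$ then $0\notin [n-4,\,n+4]$, so the $+1$ becomes a fresh term that extends the support to $[\min(n-4,0),\,\max(n+4,0)]$; the breadth is therefore $|n|+4=|p+q|+4$, which equals $|p|+|q|+4$ in the regime $pq\ge 0$ to which this paper is restricted. If $|n|\le 4$, the coefficient of $t^0$ in $(-1)^n t^n(P(t)-1)$ equals $(-1)^n a_{-n}=|a_{-n}|\ge 1$ by the alternating-sign observation, so after adding $1$ the constant term is at least $2$ and in particular nonzero; the support remains $[n-4,\,n+4]$, so the breadth equals $8$.

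The only place where any care is actually required is the sub-case $|n|\le 4$, where one must rule out accidental cancellation of the constant term; the sign-alternation of $P(t)-1$, which is automatic since $P(t)$ is the square of the symmetric alternating Laurent polynomial $V(4_1)$, disposes of this in one stroke and avoids a nine-case verification.
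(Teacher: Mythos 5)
Your proof is correct and follows essentially the same route as the paper's: expand $(-t)^{p+q}\bigl(V(0,0)-1\bigr)+1$ explicitly, observe that this summand is supported on exponents $[p+q-4,\,p+q+4]$ with extreme coefficients $\pm 1$ while the added $+1$ only touches $t^{0}$, and split according to whether $0$ lies in that range; your alternating-sign observation merely packages the paper's three cases ($p+q<-4$, $|p+q|\le 4$, $p+q>4$) into two and handles the endpoint cases $p+q=\pm 4$ uniformly. Be aware, though, that what you (and the paper's own computation) actually establish for $|p+q|>4$ is $\Br(V(p,q))=|p+q|+4$, which agrees with the stated $|p|+|q|+4$ only when $pq\ge 0$; your aside that the paper is restricted to $pq\ge 0$ is not accurate (the proposition is later invoked for $pq<0$), but this mismatch lies in the paper's statement rather than in your argument, which matches the paper's derivation.
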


\begin{proof}
The Jones polynomial of the Kanenobu knot $V(p, q)$ is:
\begin{align*}
V(p, q)  = & (-t)^{p + q} (V(0, 0) -1) + 1\\
 = & (-t)^{p +q} (t^{-4}-2t^{-3}+ 3t^{-2}-4t^{-1}+ 4- 4t + 3t^2 - 2t^3 + t^4) + 1
\\  = & (-1)^{p+q}(t^{p+q-4}-2t^{p+q-3}+ 3t^{p+q-2}-4t^{p+q-1}+ 4t^{p+q}-4t^{p+q+1}+ \\ & 3t^{p+q+2}
- 2t^{p+q+3}+ t^{p+q+4}) + 1.
\end{align*}

We have three cases to consider
\begin{enumerate}

\item If $p + q < - 4$, then the highest and lowest exponents of $t$ in $V(p, q)$ are 0 and $p +
q - 4$, respectively. So $\Br(V(p, q)) =  - p - q + 4$.

\item If $|p + q| \leq 4$, then the highest
and lowest exponents of $t$ in $V(p, q)$ are $p + q + 4$ and $p + q
- 4$, respectively. So $\Br(V(p, q)) = 8$.

\item If $p + q > 4$, then the highest
and lowest exponents of $t$ in $V(p, q)$ are $p + q + 4$ and 0,
respectively. So $\Br(V(p, q)) = p + q + 4$.
\end{enumerate}
\end{proof}

The $Q$ polynomial of any Kanenobu knot is given by the following
proposition:

\begin{prop}[Proposition 4.5, \cite{Ka2}]
Let $Q(a,b)$ be the $Q$ polynomial of the Kanenobu knot $K(a,b)$,
then we have
\begin{align*}
Q(a,b) = -\sigma_{a}\sigma_{b}(Q(8_{9})-1) +
x^{-1}(\sigma_{a+1}\sigma_{b+1} +
\sigma_{a-1}\sigma_{b-1})(Q(8_{8})-1) + 1,
\end{align*}
where $Q(8_{8}) = 1 + 4x + 6x^{2} - 10x^{3} - 14x^{4} +4x^{5}
+8x^{6} +2x^{7}$ and $Q(8_{9}) = -7 + 4x + 16x^{2} - 10x^{3} -
16x^{4} +4x^{5} +8x^{6} +2x^{7}$.
\end{prop}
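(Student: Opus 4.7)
The plan is to apply the Kauffman skein relation inside each of the two twist regions of $K(a,b)$ and to package the resulting dependence on the twist counts via the polynomials $\sigma_n(x)$, which are defined as the solutions of the linear recursion produced by a single Kauffman smoothing at one crossing of a twist region.

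First I would fix a crossing inside the $a$-twist region and apply the relation $\Lambda(L_-)+\Lambda(L_+) = x(\Lambda(L_\infty)+\Lambda(L_0))$, which together with the Reidemeister-I framing rules specialized at $a=1$ (where $Q_L(x)=F_L(1,x)$) yields a two-term linear recursion in the number of half-twists. Iterating this recursion expresses the contribution of the $a$-twist region as a linear combination of two \emph{elementary} tangle replacements (the identity and the $\infty$-smoothing), with coefficients exactly $\sigma_a$ and $\sigma_{a\pm 1}$. Repeating the argument on the $b$-twist region collapses $Q(K(a,b))$ into a bilinear expression in $\sigma_a,\sigma_{a\pm 1},\sigma_b,\sigma_{b\pm 1}$ whose coefficients are the $Q$-polynomials of a small fixed list of knots obtained by replacing each twist region by one of its two elementary smoothings.

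Next I would compute these base cases by drawing the resulting diagrams from Figure~\ref{figure13}. The relevant diagrams simplify to the knots $K(0,0) = 4_1 \# 4_1$, $8_8$, and $8_9$, up to mirror (which is harmless by the mirror invariance of $Q$). Multiplicativity of $Q$ under connected sum gives $Q(4_1 \# 4_1) = Q(4_1)^2$, and the tabulated expansions of $Q(8_8)$ and $Q(8_9)$ recorded in the statement then feed directly into the bilinear formula; the normalization $Q(\text{unknot})=1$ accounts for the trailing $+1$ and for the $-1$ inside each factor $(Q(8_i)-1)$.

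The main obstacle is the bookkeeping: one must track which smoothing corresponds to $L_0$ versus $L_\infty$ in each twist region and confirm the sign conventions that make the two pairs $\sigma_{a+1}\sigma_{b+1}$ and $\sigma_{a-1}\sigma_{b-1}$ combine symmetrically with the single prefactor $x^{-1}$, while $\sigma_a\sigma_b$ enters with a $-1$. Once the elementary diagrams have been correctly identified with $8_8$ and $8_9$ and the recursion for $\sigma_n$ fixed from its behavior at $n=-1,0,1$, the general formula follows by a double induction on $|a|$ and $|b|$ using the same skein recursion.
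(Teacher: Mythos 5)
You should first be aware that the paper contains no proof of this statement: it is imported verbatim as Proposition 4.5 of \cite{Ka2}, so there is no in-house argument to compare yours against. Taken on its own terms, your outline --- expand each twist region in the $Q$-polynomial skein theory so that the $n$-twist tangle is a combination of the two elementary tangles with Chebyshev-type coefficients $\sigma_n$, evaluate the resulting bilinear expression on a short list of closures, and close the argument by a double induction on $|a|$ and $|b|$ --- is the standard route to formulas of this shape and is in the spirit of Kanenobu's own derivation, so the skeleton is sound.

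The concrete weak point is your identification of the base cases. Replacing a twist region of the Kanenobu template by its ``shorting'' ($\infty$) smoothing does \emph{not} produce $8_8$ or $8_9$: as this paper itself uses in Section 4 (figure \ref{figure14} and the final proposition, where the resolved diagrams $U_1,U_2$ are unlinks), that smoothing yields a two-component unlink regardless of the remaining twists, so the recursion one actually gets is $Q(a,b)+Q(a-2,b)=x\bigl(Q(a-1,b)+Q(U)\bigr)$ with $Q(U)=2x^{-1}-1$, and similarly in $b$. The knots $8_8$ and $8_9$ enter not as smoothed closures but as small members of the family itself: the stated formula gives $Q(1,0)=Q(8_8)$ and $Q(1,-1)=Q(8_9)$, consistent with $K(1,0)\simeq 8_8$ and $K(1,-1)\simeq 8_9$ (compare the reduced alternating $8$-crossing diagrams in figures \ref{figure17} and \ref{figure18}), and establishing two such base values in each parameter requires separate diagrammatic work that your sketch only promises. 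Likewise the cross terms $\sigma_{a\pm1}\sigma_{b\mp1}$ arising from the bilinear expansion must be shown to reassemble into the symmetric combination $x^{-1}(\sigma_{a+1}\sigma_{b+1}+\sigma_{a-1}\sigma_{b-1})$ together with $-\sigma_a\sigma_b$, which is a nontrivial piece of algebra you defer to ``bookkeeping.'' None of this is fatal --- once the recursion is in place, it suffices to check that the right-hand side satisfies the same second-order recursion in each variable and agrees with $Q$ at two consecutive values of $a$ and of $b$ (e.g.\ via $K(0,0)=4_1\#4_1$, $K(1,0)$, $K(1,-1)$, $K(1,1)$) --- but as written, the sentence ``the relevant diagrams simplify to $K(0,0)$, $8_8$, $8_9$'' asserts exactly the step where the real content of the identity lies.
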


In the above proposition, $\sigma_{n}$ is defined as follows:

\[ \sigma_{n} =
  \begin{cases}
  \frac{\alpha^{n} - \beta^{n}}{\alpha - \beta}, & \text{if}\  n > 0,
     \\
        0, & \text{if} \ n = 0,
     \\
  -\frac{\alpha^{-n} - \beta^{-n}}{\alpha - \beta}, & \text{if} \ n < 0,
  \end{cases}
\]
where $\alpha + \beta = x$ and $\alpha\beta = 1$.

The degree of the $Q$ polynomial of any Kanenobu knot is given in
the following proposition:

\begin{prop}\label{degree}

For the Kanenobu knot $K(p, q)$, we have
\[ \deg Q(p, q) =
  \begin{cases}
  |p|+|q|+6, & \text{if} \ pq \ \geq 0,
     \\
        |p|+|q|+5, & \text{otherwise},
  \end{cases}
\]
\end{prop}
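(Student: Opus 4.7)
The plan is to substitute the closed-form expression for $Q(p,q)$ from the preceding proposition and track leading terms in the variable $x$. The two key preliminary facts are that $Q(8_8)-1$ and $Q(8_9)-1$ are both polynomials of degree $7$ with leading coefficient $2$, and that the auxiliary polynomial $\sigma_n$ is monic of degree $n-1$ for $n>0$, with $\sigma_0=0$ and $\sigma_{-n}=-\sigma_n$ (these follow from the recursion $\sigma_{n+1}=x\sigma_n-\sigma_{n-1}$ with $\sigma_0=0,\ \sigma_1=1$). Writing
\[
Q(p,q)-1 \;=\; A+B, \qquad A := -\sigma_p\sigma_q\bigl(Q(8_9)-1\bigr), \qquad B := x^{-1}\bigl(\sigma_{p+1}\sigma_{q+1}+\sigma_{p-1}\sigma_{q-1}\bigr)\bigl(Q(8_8)-1\bigr),
\]
one then compares $\deg A$ and $\deg B$ in each sign regime of $(p,q)$. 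By the symmetry $K(p,q)\approx K(q,p)$ and the mirror identity $Q(p,q)=Q(-p,-q)$ from Proposition~\ref{properties}(2), it suffices to treat the cases $p\geq q\geq 0$ and $p>0>q$.

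In the first regime $p\geq q\geq 0$, the leading monomial of $\sigma_{p+1}\sigma_{q+1}$ is $x^{p+q}$ and dominates the contribution of $\sigma_{p-1}\sigma_{q-1}$, so $\deg B = p+q+6$ with leading coefficient $+2$, while $\deg A \leq p+q+5$ (with $A=0$ if $pq=0$). Thus $\deg Q(p,q) = p+q+6 = |p|+|q|+6$. The sub-case $p=q=0$, in which $A$ vanishes identically, is verified directly from the formula and recovers the expected degree of $Q(4_1\#4_1)$.

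The main obstacle is the regime $p>0>q$, where one must detect a degree drop in $B$ caused by the index shifts $q\mapsto q\pm 1$. Since $q<0$, the relation $\sigma_{q+1}=-\sigma_{|q|-1}$ shows that $\sigma_{q+1}$ has degree $|q|-2$ (not $|q|$), and similarly $\sigma_{q-1}=-\sigma_{|q|+1}$ has degree $|q|$. A short case check (treating $p\geq 2$ with $q\leq -2$, and then separately the boundary cases $p=1$, $q=-1$, and $(p,q)=(1,-1)$) confirms that $\sigma_{p+1}\sigma_{q+1}+\sigma_{p-1}\sigma_{q-1}$ has degree $p+|q|-2$ in all non-degenerate sub-cases (and is identically zero when $(p,q)=(1,-1)$), hence $\deg B = p+|q|+4$. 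Meanwhile $-\sigma_p\sigma_q = \sigma_p\sigma_{|q|}$ is monic of degree $p+|q|-2$, giving $\deg A = p+|q|+5$ with leading coefficient $+2$. Since $\deg A > \deg B$, one concludes $\deg Q(p,q) = p+|q|+5 = |p|+|q|+5$, as required.
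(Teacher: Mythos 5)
Your proposal is correct and follows essentially the same route as the paper: substitute Kanenobu's formula for $Q(p,q)$, use that $\sigma_n$ is (up to sign) the Chebyshev-type polynomial of degree $|n|-1$ satisfying $\sigma_{k+1}=x\sigma_k-\sigma_{k-1}$, and compare the degrees of the two summands in each sign regime. If anything, your bookkeeping is more careful than the paper's one-line conclusion: in the $pq<0$ case you correctly locate the top-degree contribution in $-\sigma_p\sigma_q\bigl(Q(8_9)-1\bigr)$ (degree $|p|+|q|+5$), whereas the paper's displayed computation attributes it to the $x^{-1}(\cdots)\bigl(Q(8_8)-1\bigr)$ term (which only reaches degree $|p|+|q|+4$), and you also treat the degenerate sub-cases $|p|\le 1$ or $|q|\le 1$ explicitly.
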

\begin{proof}
We claim that $\sigma_{n} = \frac{n}{|n|}S_{|n|-1}(x)$, where
$S_{k}(x)$ is $k-th$ Chebyshev polynomial of the first kind which is
defined inductively by $S_{-1}(x) = 0, S_{0}(x) = 1$ and $S_{k}(x) =
xS_{k-1}(x) - S_{k-2}(x)$. It is clear that this claim is true for $
n = 0,1$. Now we prove this claim by showing that $(\alpha +
\beta)\sigma_{k} = \sigma_{k+1} + \sigma_{k-1}$ for $k \geq 1$. We
prove the last statement as follows:
\begin{align*}
(\alpha + \beta)\sigma_{k} = & (\alpha + \beta)(\alpha^{k-1} +
\alpha^{k-2}\beta + \ldots + \alpha\beta^{k-2} + \beta^{k-1})\\
= & \alpha(\alpha^{k-1} + \alpha^{k-2}\beta + \ldots +
\alpha\beta^{k-2} + \beta^{k-1}) + \\
& \beta(\alpha^{k-1} + \alpha^{k-2}\beta + \ldots +
\alpha\beta^{k-2} + \beta^{k-1}) \\
 = & (\alpha^{k} + \alpha^{k-1}\beta + \ldots + \alpha^{2}\beta^{k-2}
+ \alpha\beta^{k-1}) + \\
& (\alpha^{k-1}\beta + \alpha^{k-2}\beta^{2} + \ldots +
\alpha\beta^{k-1} + \beta^{k})\\
= &  (\alpha^{k} + \alpha^{k-1}\beta + \ldots +
\alpha^{2}\beta^{k-2} + \alpha\beta^{k-1} + \beta^{k}) + \\
& (\alpha^{k-1}\beta + \alpha^{k-2}\beta^{2} + \ldots +
\alpha\beta^{k-1}) \\
= & \sigma_{k+1} + (\alpha^{k-2} + \alpha^{k-3}\beta + \ldots +
\beta^{k-2}) = \sigma_{k+1} + \sigma_{k-1},
\end{align*}
where we used the fact that $\alpha\beta = 1$ in the equation before
the last one. Now the claim follows since we have that $\sigma_{n} =
-\sigma_{-n}$ for $n < 0$.

Finally, the result follows since we have $\deg Q(p,q) = \deg
(\sigma_{|p| +1}\sigma_{|q|+1}) + \deg Q(8_{8}) - 1 = |p| + |q| + 7
- 1 = |p| + |q| + 6$ for $pq \geq 0$ and $\deg Q(p,q) = \deg
(\sigma_{|p + 1| }\sigma_{|q+1|}) + \deg Q(8_{8}) - 1 = |p| + |q| +
7 - 1 - 1 = |p| + |q| + 5$ for $pq < 0$.

In the above argument we used the that $\deg 0 = -1$.
\end{proof}


The main tool in proving our main theorem in this section is the
following:

\begin{thm} [Main Theorem, \cite{Ki}]\label{Ki}
Let $D$ be a diagram of a link $L$. Then,
\begin{align}
\deg Q \leq c(D) - b(D)
\end{align}
where $c(D)$ is the crossing number of the diagram $D$ of $L$,
$b(D)$ is the maximal bridge length of $D$, i.e., the maximal number
of consecutive over-pass or under-passes crossings and $\deg Q$ is
the degree of the $Q$ polynomial of the link $L$. Moreover, if $D$
is alternating (i.e. $b(D)$ = 1) and prime, then equality holds.
\end{thm}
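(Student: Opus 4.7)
The plan is to establish the inequality $\deg Q \leq c(D) - b(D)$ by strong induction on the crossing number $c(D)$, leveraging the skein relation
\[
Q(L_+) + Q(L_-) \;=\; x\bigl(Q(L_0) + Q(L_\infty)\bigr),
\]
obtained by specializing the Kauffman skein relation at $a=1$ (so that the writhe normalization disappears). The base case $c(D)=0$ is a disjoint union of unknots, where $\deg Q = 0$ and $b(D)=0$, so the bound is immediate.

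For the inductive step, I would fix a maximal bridge $\beta$ of length $b = b(D)$ in $D$; without loss of generality $\beta$ is an overpass. Pick a crossing $c_0$ at one end of $\beta$ and rewrite
\[
Q(D) \;=\; x\bigl(Q(D_0) + Q(D_\infty)\bigr) - Q(D^{\ast}),
\]
where $D^{\ast}$ is obtained from $D$ by switching $c_0$, while $D_0$ and $D_\infty$ are its two smoothings. The idea is to iterate this relation at the $b$ consecutive bridge crossings of $\beta$. Each smoothing step removes one crossing from the diagram and contributes one factor of $x$; each switching step preserves the crossing count but produces a diagram in which the bridge at $\beta$ has been broken (because the switched crossing is now an underpass in what used to be an overpass bridge). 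After processing all $b$ crossings, the expansion of $Q(D)$ becomes a sum of terms of the form $\pm x^{k} Q(D')$ with $k \leq b$ and $D'$ a diagram with at most $c(D) - k$ crossings whose offending bridge has been dismantled. Applying the inductive hypothesis to each $D'$ and comparing $x$-degrees across the linear combination yields $\deg Q(D) \leq c(D) - b(D)$.

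The main obstacle will be bookkeeping: smoothing a bridge crossing can reconnect arcs elsewhere in the diagram and thereby create or extend bridges away from $\beta$, so one must verify that the maximal bridge of each smoothed descendant does not shrink by so much as to invalidate the inductive estimate. A case analysis based on the local position of $c_0$ within $\beta$ (interior vs. endpoint) and on whether the smoothing is coherent or incoherent with the surrounding strands should handle this. For the equality statement in the alternating prime case, I would use the fact that $b(D) = 1$ forces the target bound to be $c(D) - 1$, which can be matched from below by extracting a state in the Kauffman state sum whose $x$-degree equals $c(D)-1$; Thistlethwaite's theorem on the spread of the Kauffman polynomial for reduced alternating diagrams, together with primeness to exclude nugatory crossings, guarantees this extremal state survives and forces equality.
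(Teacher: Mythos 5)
This statement is not proved in the paper at all: Theorem \ref{Ki} is quoted verbatim from Kidwell's article \cite{Ki}, so the only meaningful comparison is with Kidwell's original argument, which, like yours, runs by induction using the $a=1$ specialization $Q(L_+)+Q(L_-)=x\bigl(Q(L_0)+Q(L_\infty)\bigr)$. The difference is in \emph{which} crossing is resolved, and that difference is exactly where your sketch breaks down. You propose to work at a crossing $c_0$ lying \emph{on} the maximal bridge $\beta$. First, the switched diagram $D^{\ast}$ has the same crossing number as $D$, so strong induction on $c(D)$ alone gives you no hypothesis to apply to it; worse, by your own description switching $c_0$ \emph{breaks} the bridge, so even if you had an inductive bound in terms of $c-b$, the relevant bridge length for $D^{\ast}$ has gone down and the bound $c(D^{\ast})-b(D^{\ast})$ has gone \emph{up} — the estimate moves in the wrong direction. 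Second, the smoothing terms are already too weak: smoothing a bridge crossing leaves a bridge of length only $b-1$ in a diagram with $c-1$ crossings, so the inductive hypothesis gives $\deg\bigl(xQ(D_0)\bigr)\leq 1+(c-1)-(b-1)=c-b+1$, one more than you need, and iterating over all $b$ bridge crossings does not repair this (each step loses the same unit). No amount of bookkeeping about bridges created elsewhere fixes a bound that is off by one at every term.

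Kidwell's trick is to resolve not a crossing of the bridge but the crossing just \emph{beyond} its end, where the bridge strand passes under. Then the two smoothings still contain the full bridge of length $b$ but have $c-1$ crossings, giving $\deg\bigl(xQ(D_0)\bigr)\leq 1+(c-1)-b=c-b$ as required; and switching that crossing \emph{lengthens} the bridge to $b+1$ without changing $c$, which is handled by a secondary downward induction on bridge length (the extreme case, a strand passing over or under everything, lets one simplify the diagram and fall back to the induction on $c$). So the missing idea is precisely this choice of crossing together with the double induction on $(c,b)$; induction on $c$ alone, applied along the bridge itself, cannot close. For the equality claim in the alternating prime case, your appeal to Thistlethwaite's spread result for the Kauffman polynomial needs the extra verification that the top $z$-coefficient does not vanish upon setting $a=1$; Kidwell instead establishes positivity of the top coefficient of $Q$ directly, so that step too requires more than you have written.
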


\begin{thm}
The crossing number of the Kanenobu knot $K(p, q)$ is given as
follows:
\[ c(K(p, q)) =
  \begin{cases}
  |p| + |q| + 6, & \text{if} \ pq < 0 \ \text{and} \ |p| + |q| = 2,
   \\
 |p| + |q| + 7, & \text{if} \ \left(pq < 0 \ \text{and} \ (|p| = 1 \ \text{or} \ |q| = 1 \ \text{but not both}) \right) \\
  & \text{or} \ \left(pq = 0 \ \text{and} \ |p| + |q| = 1\right),
    \\
 |p| + |q| + 8, & \text{if} \ pq > 0 \ \text{or} \ \left(pq = 0 \ \text{and} \ |p| + |q| \neq
 1\right).
  \end{cases}
\]
\end{thm}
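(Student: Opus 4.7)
The plan is to prove matching upper and lower bounds for $c(K(p,q))$, using Kidwell's inequality (Theorem \ref{Ki}) for the lower bound and explicit diagrams for the upper bound. Applied to any diagram $D$ of $K(p,q)$, Kidwell gives $c(D)\geq \deg Q(p,q)+b(D)$, and $b(D)=1$ iff $D$ is alternating. The key dichotomy is therefore whether the minimum-crossing diagram is alternating. If it is, it may be assumed reduced and prime (using Proposition \ref{properties}(3) that $K(p,q)$ is prime for $(p,q)\neq(0,0)$), and then the alternating equality in Theorem \ref{Ki} together with Theorem \ref{T}(1) forces $\Br(V(p,q))=c(K(p,q))=\deg Q(p,q)+1$. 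Otherwise $b(D)\geq 2$ and Kidwell upgrades to $c(K(p,q))\geq \deg Q(p,q)+2$.

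For the lower bound, I would compare the breadth formula proved above directly with $\deg Q(p,q)+1$ coming from Proposition \ref{degree}. A case-by-case check shows that the equality $\Br(V(p,q))=\deg Q(p,q)+1$ holds precisely when $pq<0$ with $|p|+|q|=2$ or when $pq=0$ with $|p|+|q|=1$. In every other case covered by the theorem, with $(p,q)\neq(0,0)$, the minimum-crossing diagram must be non-alternating, so $c(K(p,q))\geq\deg Q(p,q)+2$, which yields $|p|+|q|+8$ in case 3 and $|p|+|q|+7$ in the $pq<0$ sub-family of case 2. In the two exceptional families the bare Kidwell bound already gives $c\geq 8$, matching the claim. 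The isolated composite case $K(0,0)\approx 4_1\#4_1$ is handled by additivity of crossing number for connected sums of alternating knots, giving $c(K(0,0))=2c(4_1)=8$.

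For the upper bound, the standard diagram in Figure \ref{figure13} has $|p|+|q|+8$ crossings, which closes case 3. For each exceptional small case I would exhibit a smaller diagram by applying Reidemeister moves to the standard diagram: an $8$-crossing diagram for $K(\pm 1,\mp 1)$, an $8$-crossing diagram for $K(\pm 1,0)$, and a $(|p|+|q|+7)$-crossing diagram for $K(\pm 1,\mp k)$ when $k\geq 2$. All other sign and order combinations reduce to these via the equivalences $K(p,q)\approx K(q,p)$ and $c(K(p,q))=c(K(-p,-q))$ from Proposition \ref{properties}. The simplifications exploit the cancellation between a single opposite-sign twist in one slot and the alternating $4_1$-shaped template that forms the base of Figure \ref{figure13}.

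The main obstacle is the upper bound in the exceptional small cases: each requires an explicit sequence of Reidemeister moves whose form depends on the signs and sizes of $p$ and $q$, and one must verify that the resulting diagrams still represent the correct Kanenobu knots. By contrast, the lower bound reduces to an arithmetic comparison of the two explicit formulas for $\Br(V(p,q))$ and $\deg Q(p,q)+1$.
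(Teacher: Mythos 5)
Your proposal follows essentially the same route as the paper: lower bounds from Kidwell's inequality, with non-alternation (hence $b(D)\geq 2$) deduced by comparing $\Br(V(p,q))$ with $\deg Q(p,q)+1$, and upper bounds from the standard $(|p|+|q|+8)$-crossing diagram together with Reidemeister-move simplifications in the exceptional cases $K(\pm1,\mp1)$, $K(\pm1,0)$, $K(\pm1,\mp m)$. The only divergences are minor: you treat $K(0,0)$ separately via additivity of crossing number for alternating connected sums (arguably more carefully than the paper, whose blanket non-alternating claim does not apply to $K(0,0)$), and in the alternating exceptional cases you use the bare Kidwell bound $c\geq \deg Q+1$ where the paper invokes Thistlethwaite's theorem on the reduced alternating $8$-crossing diagrams.
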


\begin{proof}
We give the proof case by case
\begin{enumerate}
\item If $pq < 0$ and  $|p| + |q| = 2$, then we have
$(p,q) = (1,-1)$ or $(p,q) = (-1,1)$. We have used Reidemeister
moves to construct a reduced alternating diagram of $K(1, -1)$ of 8
crossings as shown in figure \ref{figure18}. So by the third part of
Theorem \ref{T} we obtain $c(K(1, -1)) = 8 = |1| + |-1| + 6 $. Now
$c(K(-1, 1)) = c(K(1, -1)) = 8$ as a result of Proposition
\ref{properties}.
\item We have two subcases that will be treated separately
\begin{enumerate}
\item If $pq < 0$ and $(|p| = 1$ or $|q| = 1$ but not both), then we
have $(p,q) = (m,-1)$ or $(p,q) = (-1,m)$ or $(p,q) = (-m,1)$ or
$(p,q) = (1,-m)$ for a positive integer $m$. We have used
Reidemeister moves to construct a diagram of $K(m, -1)$ with $m + 8$
crossings as shown in figure \ref{figure19}. Now by Theorem \ref{Ki}
and Proposition \ref{degree}, we have

\[ \deg Q(m, -1)  = m + 1 + 5 = m + 6 \leq c(D) - b(D), \]
which implies that $c(D) \geq m + 6 + b(D)$ for any knot diagram $D$
of $K(m, -1)$.  We have $b(D) \geq 2$ for any knot diagram $D$ of
the knot $K(p, -1)$ since this knot is not alternating. i, e $b(D)
\geq 2$. This follows since if it is alternating, then by Theorem
\ref{Ki}, we would have $c(K(m,-1)) = \deg Q(m,-1) + 1 = m + 7 =
\Br(V(m,-1)) = 8$ or $c(K(m,-1)) = \deg Q(m,-1) + 1 = m + 7 =
\Br(V(m,-1)) = m + 1 + 4$ which is impossible in both cases. Now the
result follows since we have already constructed a diagram of $m +
8$ crossings with $b(D) =2$. Now $c(K(m, -1)) = c(K(-m, 1)) =
c(K(1,m)) = c(K(1,-m)) = m + 1 + 7 = m + 8$ as a result of
Proposition \ref{properties}.

\item If $pq = 0 \ \text{and} \ |p| + |q| = 1$, then $(p,q) = (1,0)$
or $(p,q) = (-1,0)$ or $(p,q) = (0,1)$ or $(p,q) = (0,-1)$. It is
enough to consider one case of these four cases as a result of
Proposition \ref{properties}. We have used Reidemeister moves to
construct a reduced alternating diagram of $K(1, 0)$ of 8 crossings
as shown in figure \ref{figure17}. So by the third part of Theorem
\ref{T} we obtain $c(K(1,0)) = 8 = |1| + |0| + 7$. Now $c(K(0, 1)) =
c(K(-1, 0)) = c(K(0, -1)) = c(K(1,0)) = 8 = |1| + |0| + 7$ as a
result of Proposition \ref{properties}.

\end{enumerate}
\item We have two also two subcases that will be treated separately
\begin{enumerate}
\item If $pq > 0$, then both of $p$ and $q$ are positive or negative
integers. Now by Theorem \ref{Ki} and Proposition \ref{degree}, we
have

\[ \deg Q(p, q)  = |p| + |q| + 6 \leq c(D) - b(D), \]

which implies that $c(D) \geq |p| + |q| + 6 + b(D)$ for any knot
diagram $D$ of $K(p, q)$. We have $b(D) \geq 2$ for any knot diagram
$D$ of the knot $K(p, q)$ since this knot is not alternating. This
follows since if it is alternating, then by Theorem \ref{Ki}, we
would have $c(K(p,q)) = \deg Q(p,q) + 1 = |p| + |q| + 7 =
\Br(V(p,q)) = 8$ or $c(K(p,q)) = \deg Q(p,q) + 1 = |p| + |q| + 7 =
\Br(V(p,q)) = |p| + |q| + 4$ which is impossible in both cases. Now
the result follows since we have already a diagram of $|p| + |q| +
8$ crossings with $b(D) =2$.

\item If $pq = 0$ and $|p| + |q| \neq 1$, then we have $(p,q) =
(m,0)$ or $(p,q) = (0,m)$  for $m \neq \pm 1$. Now by Theorem
\ref{Ki} and Proposition \ref{degree}, we have

\[ \deg Q(m, 0)  = |m| + 6 \leq c(D) - b(D), \] which implies that $c(D)
\geq |m| + 6 + b(D)$ for any knot diagram $D$ of $K(m, 0)$. We have
$b(D) \geq 2$ for any knot diagram $D$ of the knot $K(m, 0)$ since
this knot is not alternating. This follows since if it is
alternating, then by Theorem \ref{Ki}, we would have $c(K(m,0)) =
\deg Q(m,0) + 1 = |m| + 7 = \Br(V(m,0)) = 8$ or $c(K(m,0)) = \deg
Q(m,0) + 1 = |m| + 7 = \Br(V(m,0)) = |m| + 4$ which is impossible in
both cases. Now the result follows since we have already a diagram
of $|m| + 8$ crossings with $b(D) =2$. Now $c(K(m, 0)) = c(K(0,m)) =
|m| + 8$ as a result of Proposition \ref{properties}.

\end{enumerate}
\end{enumerate}
\end{proof}

\begin{coro}
The Kanenobu knots $K(p, q)$ are not alternating except for $K(0,
0),\\ K(1, 0), K(0, 1), K(-1, 0), K(0, -1), K(1, -1)$ and $K(-1,
1)$.
\end{coro}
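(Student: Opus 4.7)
The plan is to combine the crossing number formula proved in the previous theorem with the breadth of the Jones polynomial computed at the start of this section, via the Kauffman--Murasugi--Thistlethwaite theorem (parts 1 and 3 of Theorem \ref{T}): if $K(p,q)$ is alternating, then $c(K(p,q)) = \Br(V(p,q))$. It therefore suffices to verify that $\Br(V(p,q)) < c(K(p,q))$ for every $K(p,q)$ outside the list of seven exceptions, and to verify directly that each of the seven is alternating.

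First, I would handle the families where the previous theorem gives $c$ outright. When $pq>0$, or when $pq=0$ with $|p|+|q|\geq 2$ and $(p,q)\neq(0,0)$, one has $c(K(p,q)) = |p|+|q|+8$ while $\Br(V(p,q))\leq \max(8,|p|+|q|+4)$, so the gap $c-\Br$ is at least $3$. When $pq<0$ with exactly one of $|p|,|q|$ equal to $1$, one has $|p|+|q|\geq 3$ and $c = |p|+|q|+7$, still strictly larger than $\max(8,|p|+|q|+4)\geq \Br(V)$. In every one of these cases the equality demanded by alternatingness fails, so the knot is non-alternating.

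The remaining range is $pq<0$ with $|p|,|q|\geq 2$, where only a conjectural value of $c$ is available, so I would argue via Theorem \ref{Ki} and Proposition \ref{degree} instead. Since every such $K(p,q)$ is prime by Proposition \ref{properties}, a reduced alternating diagram would satisfy $b(D)=1$, and the equality clause of Theorem \ref{Ki} would force $c(K(p,q)) = \deg Q(p,q)+1 = |p|+|q|+6$. But the same diagram would simultaneously give $c(K(p,q)) = \Br(V(p,q)) \leq |p|+|q|+4$, a contradiction. Hence no $K(p,q)$ in this range is alternating either.

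To finish, I would confirm that each of the seven listed knots really is alternating. For $K(\pm 1,0), K(0,\pm 1), K(1,-1)$, and $K(-1,1)$, the reduced alternating diagrams constructed inside the proof of the main theorem above provide the required certificate. For $K(0,0)\approx 4_1\#4_1$, the figure-eight knot is alternating and a connected sum of alternating knots is alternating. The only conceptual subtlety is that both the equality part of Theorem \ref{Ki} and the converse direction of Theorem \ref{T} require primeness, so the composite knot $K(0,0)$ has to be handled by this direct identification rather than by the breadth-versus-crossing-number comparison used in the other cases.
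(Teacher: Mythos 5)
Your proposal is correct and follows essentially the same route as the paper: the non-alternating cases are ruled out by playing Kidwell's degree bound for $Q$ and the crossing numbers against the breadth of the Jones polynomial via Thistlethwaite's theorem (exactly the argument embedded in the paper's proof of the crossing-number theorem and in the proposition treating $pq<0$ with $|p|,|q|\geq 2$), while the seven exceptions are certified by the explicit reduced alternating diagrams of $K(1,0)$ and $K(1,-1)$ (together with the symmetries of Proposition \ref{properties}) and by $K(0,0)\approx 4_1\#4_1$. The only blemishes are cosmetic: the stated gap ``at least $3$'' is sometimes only $2$, which does not affect the contradiction.
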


\begin{figure}[p]
\centering
\includegraphics[width=14cm,height=21cm]{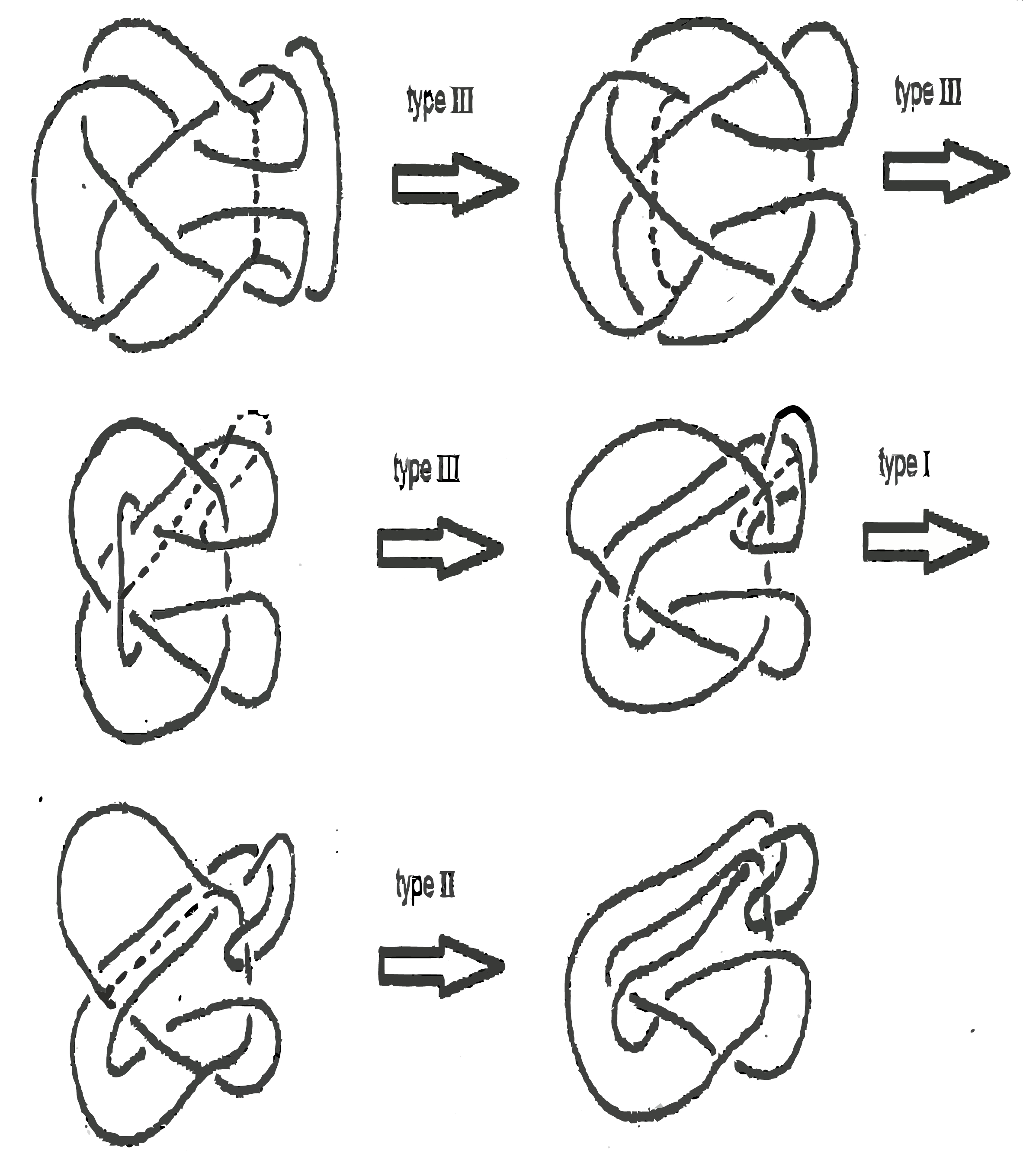}
\caption{Using Reidemeister moves to construct a reduced alternating
diagram of $K(1, 0)$.} \label{figure17}
\end{figure}

\begin{figure}[p]
\centering
\includegraphics[width=16cm,height=21cm]{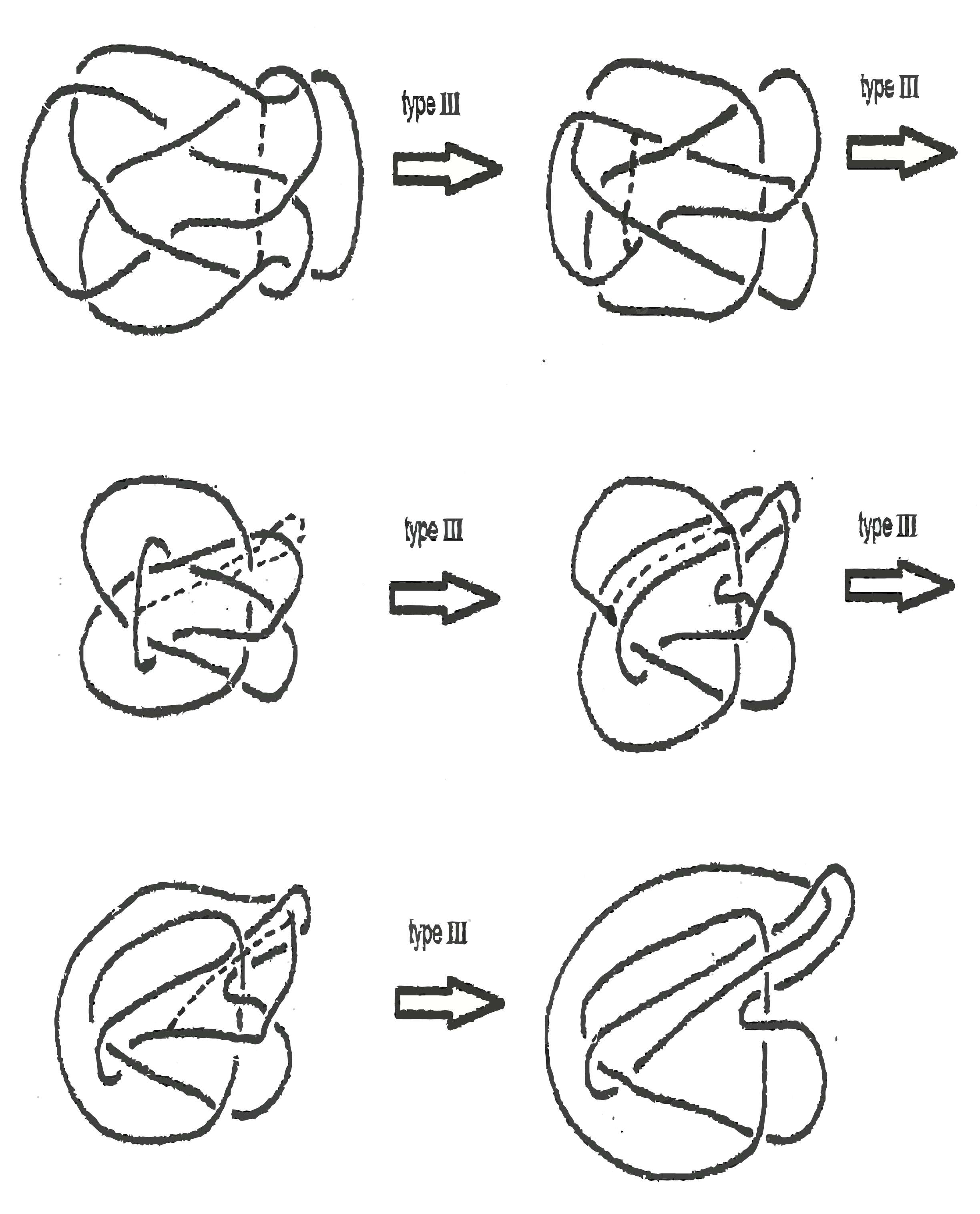}
\caption{Using Reidemeister moves to construct a reduced alternating
diagram of $K(1, -1)$.} \label{figure18}
\end{figure}

\begin{figure}[p]
\centering
\includegraphics[width=14cm,height=22cm]{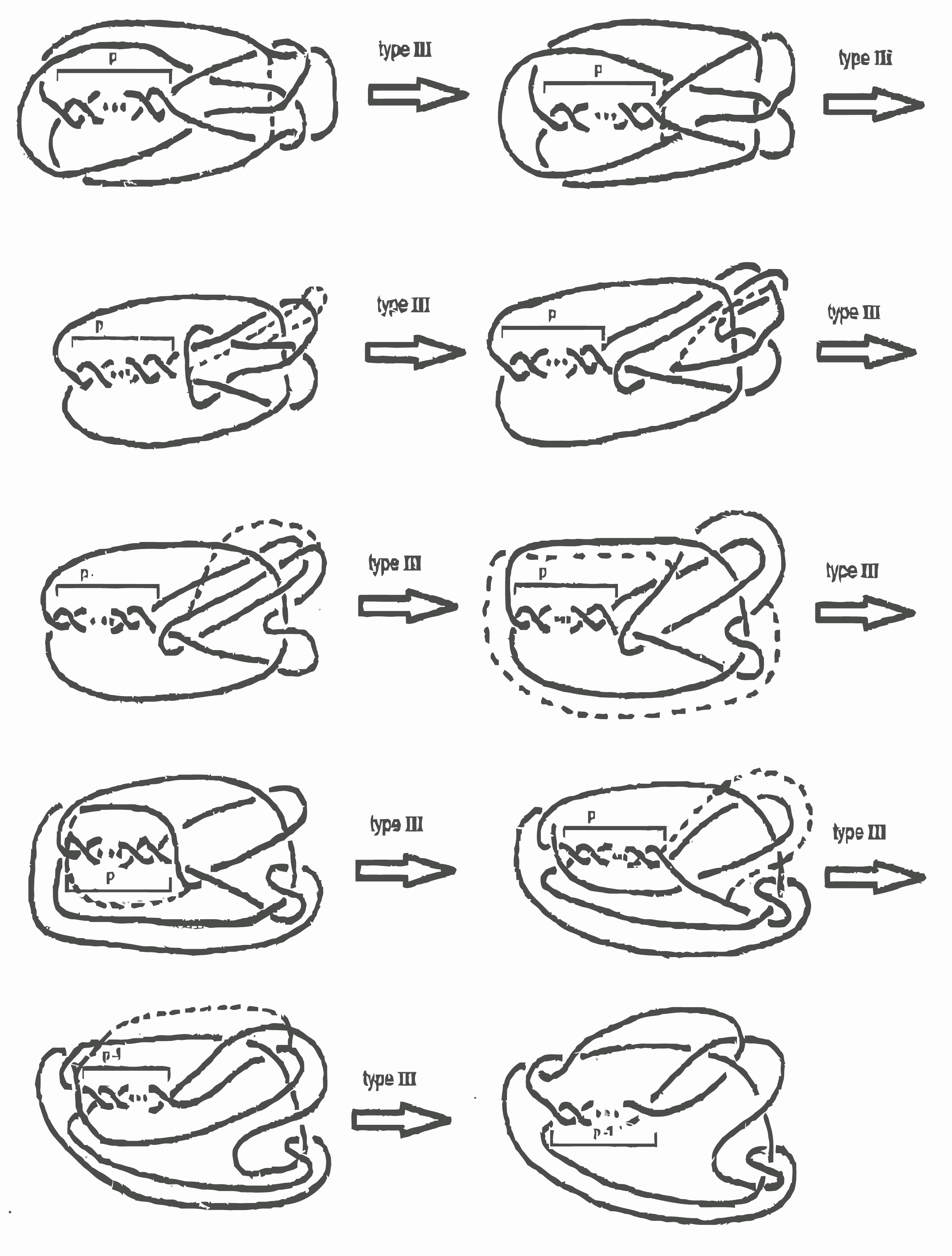}
\caption{Using Reidemeister moves to construct a reduced diagram of
$K(p, -1)$.} \label{figure19}
\end{figure}

\begin{prop}
For the case when $pq < 0$ with $|p| \neq 1$ and $|q| \neq 1$, we
have $|p|+|q| + 7 \leq c(K(p,q)) \leq |p|+|q| + 8$.
\end{prop}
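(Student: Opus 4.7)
The plan is to bound $c(K(p,q))$ from above by exhibiting a diagram and from below by combining Kidwell's inequality (Theorem \ref{Ki}) with the degree formula for the $Q$ polynomial from Proposition \ref{degree}, just as in the earlier cases of the main theorem. Since in this regime $pq < 0$ we have $\deg Q(p,q) = |p|+|q|+5$, while the standard Kanenobu diagram of Figure \ref{figure13} (or its obvious modification for negative twist parameters, cf.\ the diagram used for $K(m,-1)$ in Figure \ref{figure19}) realizes $K(p,q)$ with $|p|+|q|+8$ crossings, immediately giving the upper bound $c(K(p,q)) \leq |p|+|q|+8$.

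For the lower bound, Theorem \ref{Ki} applied to any diagram $D$ of $K(p,q)$ gives
\[
|p|+|q|+5 = \deg Q(p,q) \leq c(D) - b(D),
\]
so I would like to conclude $b(D) \geq 2$ for every diagram, which then yields $c(D) \geq |p|+|q|+7$. The main obstacle is therefore excluding the possibility that $K(p,q)$ admits an alternating (necessarily prime) diagram. I would handle this exactly as in the previous cases: if $K(p,q)$ were alternating then the equality case of Theorem \ref{Ki} together with Tait's result (Theorem \ref{T}) would force
\[
|p|+|q|+6 = \deg Q(p,q) + 1 = c(K(p,q)) = \Br(V(p,q)).
\]
Using the breadth computation, the right-hand side is either $8$ (when $|p+q| \leq 4$) or $|p+q|+4$ (when $|p+q| > 4$). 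Since $|p|,|q| \geq 2$ gives $|p|+|q| \geq 4$, the first alternative would require $|p|+|q|=2$, contradicting the hypothesis; the second would require $|p+q| = |p|+|q|+2$, which violates the triangle inequality. This rules out alternating diagrams and gives $b(D) \geq 2$.

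Combining the two directions yields $|p|+|q|+7 \leq c(K(p,q)) \leq |p|+|q|+8$, as claimed. The reason this does not pin down the crossing number (and hence motivates the conjecture $c(K(p,q)) = |p|+|q|+8$) is that one cannot, with the tools presented here, promote the bound $b(D) \geq 2$ to $b(D) \geq 3$: Kidwell's inequality is insensitive to this gap, and no analogue of Tait's theorem for non-alternating diagrams with bridge length exactly $2$ is available to push the estimate further.
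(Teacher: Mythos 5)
Your proposal is correct and follows essentially the same route as the paper: the upper bound comes from the standard $(|p|+|q|+8)$-crossing diagram, and the lower bound combines Kidwell's inequality with $\deg Q(p,q)=|p|+|q|+5$ and the exclusion of alternating diagrams via the equality case of Kidwell's theorem against the Jones-polynomial breadth. Your handling of the breadth contradiction (using $|p+q|+4$ and the triangle inequality, and $|p|,|q|\geq 2$ for the other case) is in fact slightly more careful than the paper's wording, but it is the same argument.
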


\begin{proof}
We have $\deg Q(p,q) = |p| + |q| + 5$ with a knot diagram of $|p|+
|q| + 8$ crossings. We also know that the knot $K(p,q)$ in this case
is not alternating. This follows since if it is alternating, then by
Theorem \ref{Ki}, we would have $c(K(p,q)) = \deg Q(p,q) + 1 = |p| +
|q| + 6 = \Br(V(p,q)) = 8$ or $c(K(p,q)) = \deg Q(p,q) + 1 = |p| +
|q| + 6 = \Br(V(p,q)) = |p| + |q| + 4$ which is impossible in both
cases.

\end{proof}

We suggest the following conjecture that computes the crossing
number of the Kanenobu knot $K(p,q)$ in the above case.
\begin{conj}
For the case when $pq < 0$ with $|p| \neq 1$ and $|q| \neq 1$, we
have $c(K(p,q)) \leq |p|+|q| + 8$.
\end{conj}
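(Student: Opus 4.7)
The inequality claimed by the conjecture is the diagrammatic upper bound that was already invoked, without full justification, inside the proof of the preceding proposition. My plan is therefore to make that standard-diagram count rigorous under the case hypotheses $pq<0$, $|p|\neq 1$, $|q|\neq 1$, and to record what the genuine obstacle actually is.

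First, I would adapt the Kanenobu picture of Figure~\ref{figure13} to arbitrary integer twist parameters: for $p \geq 2$ and $q \leq -2$, fill the $p$-box with $p$ positive half-twists and the $q$-box with $|q|$ negative half-twists, leaving the central $8$-crossing tangle unchanged. Call this diagram $D = D(p,q)$. By Proposition~\ref{properties}(1) (the involution $(p,q)\leftrightarrow(q,p)$) and Proposition~\ref{properties}(2) (mirror reflection sending $(p,q)$ to $(-p,-q)$), the complementary sign pattern $p\leq -2$, $q \geq 2$ is realized by a diagram with exactly the same crossing count, so it suffices to handle $D(p,q)$ with $p\geq 2$, $q\leq -2$.

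Second, I would count crossings in $D$ directly: eight in the fixed central tangle and $|p|+|q|$ distributed between the two twist boxes, for a total of $c(D) = |p|+|q|+8$. Since the crossing number is the minimum over all diagrams representing the knot, we conclude
\[
c(K(p,q)) \;\leq\; c(D) \;=\; |p|+|q|+8,
\]
which is exactly the stated conjecture.

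The main point worth flagging is that there is no substantive obstacle in this direction: the upper bound is immediate from the defining picture, and the case restrictions $|p|\neq 1$, $|q|\neq 1$ play no role in the count. What is genuinely open---and what the abstract actually conjectures---is the matching lower bound $c(K(p,q)) \geq |p|+|q|+8$. Combined with the preceding proposition, my argument only yields $|p|+|q|+7 \leq c(K(p,q)) \leq |p|+|q|+8$, and closing the gap would require strengthening Kidwell's Theorem~\ref{Ki} on this specific family, for example via a sharper HOMFLY-PT breadth inequality, a homological-thickness refinement of Kidwell's bound, or a direct structural argument ruling out an $(|p|+|q|+7)$-crossing diagram of a hyperbolic, fibered, ribbon, genus-$2$, $3$-bridge knot with the prescribed Alexander module. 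That is where the real difficulty lies; the literal statement of the conjecture is settled by the diagram exhibition above.
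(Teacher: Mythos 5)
Your diagram count is correct and does prove the inequality as literally printed: the standard Kanenobu diagram with the two twist boxes filled and the fixed $8$-crossing central tangle has exactly $|p|+|q|+8$ crossings, and the symmetry $K(p,q)\approx K(q,p)$ of Proposition~\ref{properties} covers both sign patterns. But be aware that this adds nothing beyond what the paper already establishes: the proof of the proposition immediately preceding the conjecture invokes precisely this diagram (``a knot diagram of $|p|+|q|+8$ crossings''), so the printed statement is redundant with the proven bounds $|p|+|q|+7 \leq c(K(p,q)) \leq |p|+|q|+8$. You diagnose the situation correctly: the ``$\leq$'' is evidently a typo, since the sentence introducing the conjecture says it ``computes the crossing number'' and the abstract conjectures that the crossing number \emph{is} $|p|+|q|+8$. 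The genuinely open content is the lower bound $c(K(p,q)) \geq |p|+|q|+8$, i.e., ruling out an $(|p|+|q|+7)$-crossing diagram, which Kidwell's inequality (Theorem~\ref{Ki}) cannot reach here because $\deg Q(p,q) = |p|+|q|+5$ when $pq<0$ (Proposition~\ref{degree}), and together with $b(D)\geq 2$ this yields only $|p|+|q|+7$.

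One correction to your closing remarks: you assert that the hypotheses $|p|\neq 1$, $|q|\neq 1$ ``play no role.'' That is true for the upper-bound count, but they are essential to the equality the authors actually conjecture --- the paper's main crossing-number theorem shows $c(K(p,q)) = |p|+|q|+6$ when $pq<0$ and $|p|+|q|=2$, and $c(K(p,q)) = |p|+|q|+7$ when $pq<0$ and exactly one of $|p|,|q|$ equals $1$, so the conjectured equality is false without these restrictions. In short: your argument settles only the literal misstatement; the conjecture the authors intend remains open, exactly as you flag, and your suggested routes (a sharpened degree bound for this family, or a structural exclusion of an $(|p|+|q|+7)$-crossing diagram) are reasonable directions for the missing lower bound.
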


\section{Khovanov homology of Kanenobu knots}

\begin{lemm}
The Khovanov bigraded homology groups of the figure eight knot
$\mathcal{H}^{i,j}(K)$ are given as follows:
\[
\dim \mathcal{H}^{i,j}(K) =
\begin{cases}
1, & \text{if} \ {(i,j) = (-2,-5),
(-1,-1),(0,-1),(0,1),(1,1),(2,5)},
      \\
0, & \text{otherwise}.
     \\
\end{cases}
\]

\end{lemm}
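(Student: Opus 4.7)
The plan is to exploit the fact that the figure eight knot $4_1$ is alternating with signature $\sigma(4_1)=0$. By Lee's theorem \cite{L}, the rational Khovanov homology of an alternating knot is \emph{thin}: $\mathcal{H}^{i,j}(K)$ vanishes unless $j - 2i = \sigma(K) \pm 1$. So for $4_1$ the nonzero homology groups are concentrated on the two diagonals $j = 2i \pm 1$.

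Next, I would compute the graded Euler characteristic using equation (\ref{main}) and the well-known Jones polynomial $V(4_1) = t^{-2} - t^{-1} + 1 - t + t^2$:
\[
\sum_{i,j}(-1)^i q^j \dim \mathcal{H}^{i,j}(4_1) = (q^{-1}+q)\bigl(q^{-4} - q^{-2} + 1 - q^2 + q^4\bigr) = q^{-5} + q^5.
\]
Combined with the diagonal constraint and the standard decomposition of a $\mathbb{Q}$-thin knot into ``knight-move'' pairs of generators in bigradings $(i,2i+\sigma-1)$ and $(i+1,2i+\sigma+3)$ together with a single exceptional Lee pair at $(0,\sigma\pm 1)$ --- a structural fact that follows from Proposition \ref{lee} together with the collapse of the Lee spectral sequence at $E_2$ in the thin case --- the dimensions and bigradings of all generators are pinned down uniquely. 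Explicitly, the two knight-move pairs $\{(-2,-5),(-1,-1)\}$ and $\{(1,1),(2,5)\}$ contribute $q^{-5} - q^{-1} - q + q^5$ to the Euler sum, and the exceptional pair at $(0,\pm 1)$ contributes $q^{-1} + q$, matching the total $q^{-5}+q^5$ and giving exactly the six bigradings claimed.

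The only nontrivial input is thinness of alternating knots; everything else is bookkeeping against the Jones polynomial. The main obstacle --- really the only place where a black box is invoked --- is establishing the knight-move/exceptional-pair decomposition for thin knots; once this is in hand, matching the six monomials of $(q^{-1}+q)V(4_1)|_{\sqrt{t}=-q}$ to bigradings on the two diagonals $j=2i\pm 1$ is mechanical. As a purely self-contained alternative, one could instead apply the skein long exact sequence (\ref{sequence}) to the standard $4$-crossing reduced alternating diagram of $4_1$: resolving a single crossing reduces the problem to the Khovanov homology of two $3$-crossing diagrams, whose computation is direct.
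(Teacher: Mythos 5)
Your argument is correct, but it is worth noting that the paper itself offers no proof of this lemma at all: the Khovanov homology of the figure eight knot is simply quoted as a known computation (it goes back to Khovanov's original paper and is reproduced in the standard tables). Your route --- Lee's thinness theorem for alternating knots, $\sigma(4_1)=0$, the knight-move pairing plus the single exceptional Lee pair at $(0,\pm 1)$, and then matching against the graded Euler characteristic $(q^{-1}+q)V(4_1)|_{\sqrt t=-q}=q^{-5}+q^{5}$ --- is a valid derivation, and the two ``black boxes'' you invoke (support on the diagonals $j-2i=\sigma\pm1$ and the knight-move/exceptional-pair structure) are both available in the very reference \cite{L} the paper already cites, so nothing extraneous is needed. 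Your arithmetic checks out: the two knight pairs contribute $q^{-5}-q^{-1}-q+q^{5}$ and the exceptional pair $q^{-1}+q$, and since distinct knight pairs have their lowest $q$-powers in distinct degrees, the nonnegativity of the multiplicities forces the decomposition to be the one you list, so the six bigradings are indeed pinned down uniquely; this is just Lee's general statement that Jones polynomial plus signature determine the Khovanov homology of an alternating knot, specialized to $4_1$. Compared with the paper, your approach buys a self-contained justification (modulo Lee) of a fact the authors take for granted, and it generalizes immediately to any alternating knot; the alternative you sketch via the skein long exact sequence on the $4$-crossing diagram would also work and would be closer in spirit to the inductive technique the paper uses later for $K(p,0)$, but as written it is only a sketch, whereas your main argument is essentially complete.
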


\begin{lemm}\label{slice}
The Kanenobu knot $K(p, 0)$ is a slice knot. Therefore, we obtain
$s(K(p, 0)) = 0$.
\end{lemm}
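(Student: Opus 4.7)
The plan is to verify sliceness directly from the diagram of $K(p,0)$ and then deduce the $s$-invariant statement from the slice-genus bound already recorded as inequality \eqref{s}.

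First I would specialize the standard diagram of $K(p,q)$ in figure \ref{figure13} to $q=0$. With no half twists in the lower twist box, the bottom clasp region becomes an untwisted pair of parallel arcs, so the two local tangles joining the top and bottom of the diagram have a manifest mirror relationship across a horizontal axis. This is exactly the feature which makes Kanenobu knots ribbon, and in the case $q=0$ it can be displayed very concretely: one can attach a single band to the diagram (running across the middle, parallel to the twist box) whose effect is a saddle move splitting $K(p,0)$ into an unlinked two-component unlink. Running this band move in reverse, together with capping off the two circles by disks, produces a ribbon disk for $K(p,0)$ in $B^{4}$. Alternatively, one can simply invoke Theorem \ref{main} of Kanenobu, which asserts that every knot $K(p,q)$ is ribbon, and specialize to $q=0$.

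Once $K(p,0)$ is shown to be ribbon, it is a slice knot, so its slice genus satisfies $g^{*}(K(p,0))=0$. The $s$-invariant conclusion is then immediate from Rasmussen's inequality \eqref{s}:
\[
|s(K(p,0))| \leq 2g^{*}(K(p,0)) = 0,
\]
so $s(K(p,0))=0$.

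The only genuinely nontrivial step is exhibiting (or citing) the ribbon disk; once that is in hand, the $s$-invariant statement is a one-line application of \eqref{s}. I expect the main obstacle to be drawing the band move cleanly on the figure \ref{figure13} diagram with $q=0$ in a way the reader can verify without ambiguity; in practice the cleanest option is to cite Kanenobu's theorem for the ribbon property and reserve the explicit band-move picture for a remark.
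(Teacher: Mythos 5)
Your proposal is correct and follows essentially the same route as the paper: cite Kanenobu's Theorem \ref{main} that $K(p,q)$ is ribbon (hence slice, so $g^{*}=0$) and then conclude $s(K(p,0))=0$ from inequality \eqref{s}. The explicit band-move sketch you offer is a nice optional supplement, but the paper simply invokes the ribbon property as you do in your alternative.
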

\begin{proof}
We know that the knot $K(p,0)$ is a ribbon knot by Theorem
\ref{main}, so it is a slice knot. The second part follows by
inequality \ref{s}.
\end{proof}

\begin{lemm}\label{seq}
We have
\begin{align*}
\dim \overline{\mathcal{H}}^{x(D), 2x(D) - y(D) - 1}(D) & =
\dim \overline{\mathcal{H}}^{x(D) + 1, 2x(D) - y(D) + 3}(D) + 1,\\
\dim \overline{\mathcal{H}}^{x(D), 2x(D) - y(D) + 1}(D) & = \dim
\overline{\mathcal{H}}^{ x(D) - 1, 2x(D) - y(D) - 3}(D) + 1, \\
\dim \mathcal{H}^{i,j}(K) & = \dim \mathcal{H}^{ i+1, j + 4}(K) + 1.
\end{align*}
for any diagram $D$ of a homologically thin knot $K$ with $s(K) = 0$
and $j-2i = \sigma(K) - 1$.
\end{lemm}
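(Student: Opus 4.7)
The plan is to apply the Lee spectral sequence of Proposition~\ref{lee}(3), whose $E_1$-page is $\mathcal{H}^{*,*}(K)$ and which converges to $H_{Lee}(K)$. Since $K$ is homologically thin with $s(K)=0$, the Khovanov homology is supported on the two adjacent diagonals $j-2i=\pm 1$; write $a_i:=\dim \mathcal{H}^{i,2i-1}(K)$ and $b_i:=\dim \mathcal{H}^{i,2i+1}(K)$ for the dimensions on the lower and upper diagonals, respectively. The goal is then to extract the numerical relations among the $a_i$'s and $b_i$'s that are forced by the small size of $H_{Lee}(K)$.

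The crucial step is a bidegree analysis on these two diagonals. Every Lee differential has $(i,j)$-bigrading of the form $(1,4k)$ for some $k\ge 1$. A class on the upper diagonal at $(i,2i+1)$ maps under any such differential to $(i+1,2i+1+4k)$, which lies strictly above both diagonals at $i+1$ for every $k\ge 1$; so no nontrivial differential leaves the upper diagonal. A class on the lower diagonal at $(i,2i-1)$ maps to $(i+1,2i-1+4k)$, which meets the upper diagonal $j=2(i+1)+1$ precisely when $k=1$ and lands off both diagonals otherwise. Hence the only possibly nonzero differential is $d_1\colon \mathcal{H}^{i,2i-1}(K)\to \mathcal{H}^{i+1,2i+3}(K)$, and the spectral sequence collapses at $E_2=E_\infty$.

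Finally, identifying $E_\infty$ with $H_{Lee}(K)$, Proposition~\ref{lee}(1,2) together with the defining property of $s(K)$ forces the two Lee generators to sit at bidegrees $(0,-1)$ and $(0,1)$, since their $j$-gradings differ by $2$ and average to $s(K)=0$. Matching $E_2$ with this configuration forces $d_1^i$ to be injective for $i\neq 0$ with a one-dimensional kernel at $i=0$, and dually $d_1^{i-1}$ to be surjective onto $\mathcal{H}^{i,2i+1}(K)$ for $i\neq 0$ with a one-dimensional cokernel at $i=0$. A rank count then yields $a_0=b_1+1$ and $b_0=a_{-1}+1$. Translating these equalities through (\ref{relation}), using $\overline{\mathcal{H}}^{x(D),2x(D)-y(D)-1}(D)=\mathcal{H}^{0,-1}(K)$ and the analogous shifted identifications for the remaining bidegrees, yields the first two displayed equalities verbatim, and the third displayed equality is the common restatement of both on the diagonals $j-2i=\sigma(K)\pm 1$. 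The main technical point is the bidegree bookkeeping that eliminates all higher-page differentials on the two diagonals, together with the precise location of the two Lee survivors forced by $s(K)=0$; both reduce to direct inspection once the setup is in place.
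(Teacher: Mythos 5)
The paper states Lemma \ref{seq} without any proof, so there is no argument of the authors to compare against line by line; your route via the Lee spectral sequence of Proposition \ref{lee}(3), together with Rasmussen's placement of the two surviving generators at quantum gradings $s(K)\pm 1=\pm 1$ in homological degree $0$, is surely the intended one. Your bidegree bookkeeping is correct: for a thin knot supported on $j-2i=\pm 1$ the only possibly nonzero differential is $d_1\colon \mathcal{H}^{i,2i-1}(K)\to\mathcal{H}^{i+1,2i+3}(K)$, the sequence collapses at $E_2$, and matching $E_2=E_\infty$ with the two Lee survivors gives $\dim\mathcal{H}^{0,-1}(K)=\dim\mathcal{H}^{1,3}(K)+1$ and $\dim\mathcal{H}^{0,1}(K)=\dim\mathcal{H}^{-1,-3}(K)+1$, which translate through equation (\ref{relation}) into the first two displayed identities. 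That part is complete and correct.

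The gap is in your final sentence. The third displayed equality is \emph{not} a common restatement of the first two: read literally, for all $(i,j)$ with $j-2i=\sigma(K)-1$, it is false --- for the figure eight knot it fails at $(i,j)=(1,1)$, where $\dim\mathcal{H}^{1,1}=\dim\mathcal{H}^{2,5}=1$ --- and it even contradicts the second identity at $(i,j)=(-1,-3)$, where it would read $\dim\mathcal{H}^{-1,-3}(K)=\dim\mathcal{H}^{0,1}(K)+1$. What your own kernel/cokernel analysis actually yields on the lower diagonal is a dichotomy: $d_1$ is an isomorphism for $i\notin\{0,-1\}$, so $\dim\mathcal{H}^{i,j}(K)=\dim\mathcal{H}^{i+1,j+4}(K)$ there, and the ``$+1$'' corrections occur only at the two special bidegrees $(0,\sigma(K)\mp 1)$ recorded in the first two identities. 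This equality version without the $+1$ is in fact how the paper later uses the ``third statement'' (e.g.\ $\dim\mathcal{H}^{3,5}(K\#K)=\dim\mathcal{H}^{4,9}(K\#K)$), so the defect lies as much in the lemma's wording as in your argument; but as a proof of the statement as printed, your last step asserts something that does not follow and cannot, since it is false. You should either restrict the third identity to $(i,j)=(0,\sigma(K)-1)$, where it coincides with the first, or state and prove the corrected dichotomy explicitly.
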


The following lemma will be used later in this paper.
\begin{lemm}\label{signaturel}
$\sigma(K(p, 0)) = 0 $ for any integer $p$.
\end{lemm}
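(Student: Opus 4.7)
The approach is almost immediate given the groundwork already laid in the excerpt. The plan is to piggy-back on Lemma \ref{slice} and invoke the classical signature--slice-genus inequality $|\sigma(K)|\le 2g^{*}(K)$ due to Murasugi, which is the signature analogue of the Rasmussen bound \eqref{s} that the authors have already employed.

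First, I would recall from Lemma \ref{slice} (which uses Theorem \ref{main}) that $K(p,0)$ is a ribbon knot, hence slice, so $g^{*}(K(p,0))=0$. Then applying the inequality $|\sigma(K(p,0))|\le 2g^{*}(K(p,0))=0$ immediately forces $\sigma(K(p,0))=0$. Strictly speaking, the classical Murasugi inequality is stated for the Seifert genus, but its extension to the slice genus (originally due to Kauffman--Taylor) is standard and is the form one wants here; a one-line citation of either reference suffices.

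The only real issue is that this relies on a result not yet quoted in the paper, so the cleanest way to present the proof is to mention the inequality explicitly and give a reference, in complete parallel to how \eqref{s} is handled for the Rasmussen invariant. I do not expect any genuine obstacle in this step.

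If the authors prefer to avoid citing an external inequality, a backup plan is to compute $\sigma(K(p,0))$ directly from a Seifert matrix obtained from the Kanenobu diagram, and to show the symmetrized form $V+V^{T}$ is metabolic (a hyperbolic pairing) by exhibiting an explicit half-dimensional isotropic subspace coming from the ribbon disks. However this route is substantially more laborious and, in view of Lemma \ref{slice}, entirely redundant, so the Murasugi-style one-line proof is clearly preferable.
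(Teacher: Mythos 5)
Your argument is correct. The paper in fact states Lemma \ref{signaturel} without any proof at all, so there is no ``paper proof'' to compare against; your proposal supplies exactly the justification one would expect. The chain $K(p,0)$ ribbon (Theorem \ref{main}, as already exploited in Lemma \ref{slice}) $\Rightarrow$ slice $\Rightarrow$ $g^{*}(K(p,0))=0$, combined with the classical bound $|\sigma(K)|\le 2g^{*}(K)$ (Murasugi; Kauffman--Taylor for the slice-genus form), forces $\sigma(K(p,0))=0$, and this is the standard one-line argument that slice knots have vanishing signature. Your presentation even mirrors the way the authors handle the Rasmussen invariant via inequality \eqref{s} in Lemma \ref{slice}, so adding the explicit citation for the signature inequality, as you suggest, is the only thing needed; the Seifert-matrix backup computation is indeed unnecessary.
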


\begin{prop}
The Khovanov bigraded homology groups of the disjoint union of two
figure eight knots $K \sqcup  K$ are given by the following:

\[
\dim \mathcal{H}^{i,j}(K\sqcup K) =
\begin{cases}
1,  & \text{if}  \ {(i,j) = (-4,-10),
(-2,-2),(0,-2),(0,2),(2,2),(4,10)},
      \\
    2,
     & \text{if}  \ {(i,j)  = (-3,-6), (-2,-6), (-2,-4), (-1,-4), (-1,-2)}, \\ & { \qquad \qquad \ (-1,0),
       (1,0), (1,2), (1,4), (2,4), (2,6), (3,6)},
     \\
6, &  \text{if}  \  {(i,j) = (0, 0)},
      \\
0, & \text{otherwise}.
     \\
\end{cases}
\]
\end{prop}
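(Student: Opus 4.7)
The plan is to apply the Künneth formula for Khovanov homology of a disjoint union. Since the Khovanov chain complex is multiplicative under disjoint union and we are working over $\mathbb{Q}$, there are no Tor terms, and one obtains
\begin{equation*}
\mathcal{H}^{i,j}(K \sqcup K) \;\cong\; \bigoplus_{\substack{i_1+i_2=i \\ j_1+j_2=j}} \mathcal{H}^{i_1,j_1}(K) \otimes \mathcal{H}^{i_2,j_2}(K).
\end{equation*}
This reduces the computation to a bookkeeping problem, once the generators of $\mathcal{H}^{*,*}(K)$ for the figure eight knot are known.

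First I would record, from the preceding lemma, that $\mathcal{H}^{*,*}(K)$ is supported in the six bidegrees $(-2,-5)$, $(-1,-1)$, $(0,-1)$, $(0,1)$, $(1,1)$, $(2,5)$, each of dimension one. The Künneth isomorphism then says that $\dim \mathcal{H}^{i,j}(K\sqcup K)$ equals the number of ordered pairs of these six bidegrees whose coordinatewise sum is $(i,j)$. Since $6 \times 6 = 36$, the total dimension must be $36$, which is a useful sanity check for the final table.

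Carrying out this enumeration, a pair of equal generators contributes a single ordered pair and accounts for the entries of dimension $1$ at bidegrees such as $(-4,-10) = (-2,-5)+(-2,-5)$, $(-2,-2)$, $(0,-2)$, $(0,2)$, $(2,2)$, $(4,10)$. A pair of distinct generators whose sum is uniquely realized contributes two ordered pairs, accounting for the twelve entries of dimension $2$ at bidegrees such as $(-3,-6)$, $(-2,-6)$, $(-2,-4)$, $(-1,-4)$, and so on up to $(3,6)$.

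The only subtle place is the central bidegree $(0,0)$, where three distinct unordered pairs of opposite generators collapse to the same target: $\{(-2,-5),(2,5)\}$, $\{(-1,-1),(1,1)\}$, and $\{(0,-1),(0,1)\}$. Each of these contributes two ordered pairs, so $\dim \mathcal{H}^{0,0}(K\sqcup K) = 6$, matching the asserted value. No step presents a genuine obstacle; the argument is a direct application of Künneth followed by a careful tabulation, the only pitfall being the multiplicity $6$ at $(0,0)$.
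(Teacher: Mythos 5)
Your proposal is correct and follows essentially the same route as the paper: the paper's proof consists precisely of invoking the disjoint-union (K\"unneth-type) formula $\mathcal{H}^{i,j}(K\sqcup K) = \bigoplus_{l,m} \mathcal{H}^{l,m}(K)\otimes\mathcal{H}^{i-l,j-m}(K)$ from Corollary 12 of \cite{K1} and tabulating. Your explicit enumeration, including the multiplicity $6$ at $(0,0)$ and the total-dimension check $36$, is accurate.
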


\begin{proof}
We use $\mathcal{H}^{i,j}(K\sqcup K) = {\bigoplus}_{l,m \in
\mathbb{Z}} (\mathcal{H}^{l,m}(K) \otimes \mathcal{H}^{i-l,j-m}(K))
$ given in [Corollary 12, \cite{K1}].
\end{proof}
\begin{prop}
The Khovanov bigraded homology groups of the Kanenobu knot $K(0,0)$
are given by the following:

\[
\dim \mathcal{H}^{i,j}(K(0,0)) =
\begin{cases}
1, & \text{if} \ {(i,j) = (-4,-9), (-3,-7),(-3,-5),(-2,-3),(2,3)},
\\ & \qquad \qquad \ (3,5), (3,7), (4,9),
      \\
2, & \text{if} \ {(i,j) = (-2,-5), (-1,-3), (-1,-1), (1,1), (1,3),
(2,5)},
\\
3, & \text{if} \  {(i,j) = (0, -1), (0,1)},
      \\
0, & \text{otherwise}.
     \\
\end{cases}
\]
\end{prop}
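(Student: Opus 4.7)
Since $K(0,0) \approx 4_1 \# 4_1$ by Proposition \ref{properties}(3), the plan is to bootstrap from the Khovanov homology of the disjoint union $4_1 \sqcup 4_1$ (just computed in the preceding proposition) to that of the connected sum $K(0,0)$ via the long exact sequence (\ref{sequence}). Concretely, one takes a diagram of $K(0,0)$ exhibiting the connected-sum structure, and at a carefully chosen crossing near the connecting neck, the two resolutions can be identified (after Reidemeister moves) with a diagram of $4_1 \sqcup 4_1$ on one side and a diagram of $4_1 \# 4_1$ itself (up to a harmless Reidemeister~I twist) on the other. Feeding these into the long exact sequence then relates $\mathcal{H}^{i,j}(K(0,0))$ to the known homology groups of $4_1 \sqcup 4_1$ and of $4_1$ (from the initial lemma of this section), leaving only the ranks of the connecting homomorphisms $\delta$ to be determined.

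To fix these ranks I would assemble three numerical constraints on $\mathcal{H}^{i,j}(K(0,0))$. First, the graded Euler characteristic is directly computable from the Jones polynomial $V(0,0) = (t^{-2} - t^{-1} + 1 - t + t^2)^2$ given in Proposition \ref{properties}(4) via the relation (\ref{main}), which determines $\sum_i (-1)^i \dim \mathcal{H}^{i,j}(K(0,0))$ for every $j$. Second, $K(0,0)$ is slice by Lemma \ref{slice} (applied with $p=0$), so $s(K(0,0)) = 0$; additivity of signature under connected sum gives $\sigma(K(0,0)) = 2\sigma(4_1) = 0$; and since Proposition \ref{properties}(2) yields $K(0,0)' \approx K(0,0)$, the knot is amphichiral, forcing the symmetry $\dim \mathcal{H}^{i,j}(K(0,0)) = \dim \mathcal{H}^{-i,-j}(K(0,0))$. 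Third, once one establishes that $K(0,0)$ is homologically thin (support concentrated on the two diagonals $j - 2i = \pm 1$), Lemma \ref{seq} furnishes the Lee-spectral-sequence identity linking dimensions across the two diagonals, with an extra $+1$ occurring at the distinguished bigradings corresponding to the surviving Lee generators at $(0, \pm 1)$.

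Combining the Euler characteristic, the amphichiral symmetry, and the Lee-identity from Lemma \ref{seq}, one obtains a linear system in the dimensions $\dim \mathcal{H}^{i, 2i \pm 1}(K(0,0))$ that admits a unique solution, and a short calculation verifies this solution matches exactly the values stated in the proposition.

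The main obstacle is establishing thinness of $K(0,0)$, that is, showing that $\mathcal{H}^{i,j}(K(0,0))$ vanishes off the two diagonals $j - 2i = \pm 1$. The long exact sequence argument above should achieve this, since the input homology groups (of $4_1$ and of $4_1 \sqcup 4_1$) are themselves thin or nearly so, but the bookkeeping of grading shifts in (\ref{sequence}) is delicate; one may need to iterate the argument at more than one crossing and lean on the amphichiral symmetry and Euler-characteristic constraints simultaneously to rule out off-diagonal contributions before Lemma \ref{seq} can be invoked.
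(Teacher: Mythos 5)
Your proposal is correct in substance but takes a genuinely different route from the paper. The paper never resolves a crossing for this computation: it quotes Khovanov's connected-sum long exact sequence [Proposition 34, \cite{K1}] relating $\mathcal{H}(K\# K)$ and $\mathcal{H}(K\sqcup K)$ outright, and then chases that sequence row by row ($j=9,7,5,3,1$), feeding in the previously computed dimensions of $\mathcal{H}^{i,j}(K\sqcup K)$, the knight-move identity of Lemma \ref{seq}, the vanishing of the Euler characteristic of an exact sequence, and finally the amphichiral symmetry $\dim\mathcal{H}^{i,j}=\dim\mathcal{H}^{-i,-j}$ to recover the negative rows. You instead reconstruct an equivalent sequence from the unoriented skein triangle (\ref{sequence}) at a half-twisted connecting band (your identification of the two resolutions as $4_1\sqcup 4_1$ and $4_1\# 4_1$ is correct, and no Reidemeister~I twist is really needed), but you use it essentially only to force thinness; after that, the graded Euler characteristic from $V(0,0)$, amphichirality, and the knight-move pairing with the exceptional pair at $(0,\pm1)$ determine all dimensions by a telescoping linear system that indeed has a unique solution. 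What each approach buys: the paper's chase needs the full table for $K\sqcup K$ at every step but requires no structural input beyond Lemma \ref{seq} at isolated spots, while your argument dispenses with $\mathcal{H}(K\sqcup K)$ entirely once thinness is known, at the price of making thinness the crux. Your sketched route to thinness does close up (nonvanishing of $\mathcal{H}^{i,j}(K\# K)$ propagates down the diagonal until it forces support of $\mathcal{H}(K\sqcup K)$ on $j-2i\in\{-2,0,2\}$, confining $K\# K$ to $j-2i\in\{-1,1,3\}$, and amphichirality then excludes $3$), but it is cleaner simply to observe that $4_1\#4_1$ is an alternating knot, hence homologically thin by Lee, with $\sigma=0$ and $s=0$ as you note; this also repairs a gap the paper leaves implicit, since it invokes Lemma \ref{seq} for $K\# K$ without ever establishing its thinness.
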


\begin{proof}
It is well-know that the Kanenobu knot $K(0,0)$ is the connected sum
of two figure eight knots $K\# K$. We use the following long exact
sequence given in [Proposition 34, \cite{K1}]:
\[ \ldots \rightarrow  \mathcal{H}^{i-1,j-1}(K\sqcup K) \rightarrow \mathcal{H}^{i-1,j-2}(K\# K)
\rightarrow \mathcal{H}^{i,j}(K\# K) \rightarrow
\mathcal{H}^{i,j-1}(K\sqcup K) \rightarrow \ldots
\]
We consider cases by case
\begin{enumerate}
\item If $j = 9$ then we have two subcases
\begin{enumerate}

\item If $i = 4$
\begin{align*}
\ldots \rightarrow \mathcal{H}^{4,11}(K\# K) \rightarrow
\mathcal{H}^{4,10}(K\sqcup K) \rightarrow & \mathcal{H}^{4,9}(K\# K)
\rightarrow \mathcal{H}^{5,11}(K\# K)\rightarrow \ldots \\
0 \rightarrow \mathbb{Q} \rightarrow & \mathcal{H}^{4,9}(K\# K)
\rightarrow 0, \\
\end{align*}
so we get $\dim \mathcal{H}^{4,9}(K\# K) = 1$.
\item If $i = 5$
\begin{align*} \ldots \rightarrow \mathcal{H}^{5,11}(K\# K)
\rightarrow \mathcal{H}^{5,10}(K\sqcup K) \rightarrow &
\mathcal{H}^{5,9}(K\# K)
\rightarrow \mathcal{H}^{6,11}(K\# K)\rightarrow \ldots \\
0 \rightarrow 0 \rightarrow & \mathcal{H}^{5,9}(K\# K)
\rightarrow 0, \\
\end{align*}
so we get $\dim \mathcal{H}^{5,9}(K\# K) = 0$.
\end{enumerate}
\item If $j = 7$ then we have two subcases
\begin{enumerate}

\item If $i = 3$
\begin{align*} \ldots \rightarrow \mathcal{H}^{3,8}(K\sqcup K)
\rightarrow \mathcal{H}^{3,7}(K\# K) \rightarrow &
\mathcal{H}^{4,9}(K\# K)
\rightarrow \mathcal{H}^{4,8}(K\sqcup K)\rightarrow \ldots \\
0 \rightarrow \mathcal{H}^{3,7}(K\# K) \rightarrow &
\mathcal{H}^{4,9}(K\# K) \rightarrow 0, \\
\end{align*}
so we get $\dim \mathcal{H}^{3,7}(K\# K) = \dim
\mathcal{H}^{4,9}(K\# K) = 1$.
\item If $i = 4$
\begin{align*} \ldots \rightarrow \mathcal{H}^{4,8}(K\sqcup K)
\rightarrow \mathcal{H}^{4,7}(K\# K) \rightarrow &
\mathcal{H}^{5,9}(K\# K)
\rightarrow \mathcal{H}^{5,8}(K\# K)\rightarrow \ldots \\
0 \rightarrow \mathcal{H}^{4,7}(K\# K)  \rightarrow &
\mathcal{H}^{5,9}(K\# K) \rightarrow 0, \\
\end{align*}
so we get $\dim \mathcal{H}^{4,7}(K\# K) =\dim \mathcal{H}^{5,9}(K\#
K) = 0$.
\end{enumerate}
\item If $j =5$, then we have

\begin{align*}
\ldots \rightarrow \mathcal{H}^{2,7}(K\# K) \rightarrow
\mathcal{H}^{2,6}(K\sqcup K) \rightarrow & \mathcal{H}^{2,5}(K\# K)
\rightarrow \mathcal{H}^{3,7}(K\# K) \\ \rightarrow
\mathcal{H}^{3,6}(K\sqcup K) & \rightarrow \mathcal{H}^{3,5}(K\# K)
\rightarrow \mathcal{H}^{4,7}(K\# K) \rightarrow \ldots \\
0 \rightarrow \mathbb{Q}\oplus\mathbb{Q} \rightarrow
\mathcal{H}^{2,5}(K\# K) \rightarrow & \mathbb{Q} \rightarrow
\mathbb{Q}\oplus\mathbb{Q}
\rightarrow \mathcal{H}^{3,5}(K\# K) \rightarrow 0, \\
\end{align*}
Now we have $\dim \mathcal{H}^{3,5}(K\# K) = \dim
\mathcal{H}^{4,9}(K\# K) = 1$ as a result of the third statement of
Lemma \ref{seq}. Therefore, we obtain $\dim \mathcal{H}^{2,5}(K\# K)
= 2$ since the Euler characteristic of an exact sequence must
vanish.
\item If $j = 3$, then we have
\begin{align*}
\ldots \rightarrow \mathcal{H}^{1,5}(K\# K) \rightarrow
\mathcal{H}^{1,4}(K\sqcup K) \rightarrow & \mathcal{H}^{1,3}(K\# K)
\rightarrow \mathcal{H}^{2,5}(K\# K) \\  \rightarrow
\mathcal{H}^{2,4}(K\sqcup K) \rightarrow \mathcal{H}^{2,3}(K\# K)
\rightarrow & \mathcal{H}^{3,5}(K\# K) \rightarrow \mathcal{H}^{3,4}(K\sqcup K) \rightarrow \ldots \\
0 \rightarrow \mathbb{Q}\oplus\mathbb{Q} \rightarrow
\mathcal{H}^{1,3}(K\# K) \rightarrow \mathbb{Q} \oplus \mathbb{Q}
\rightarrow &  \mathbb{Q} \oplus \mathbb{Q} \rightarrow
\mathcal{H}^{2,3}(K\# K) \rightarrow \mathbb{Q} \rightarrow 0, \\
\end{align*}
Now we have $\dim \mathcal{H}^{2,3}(K\# K) = \dim
\mathcal{H}^{3,7}(K\# K) = 1$ as a result of the third statement of
Lemma \ref{seq}. Therefore, we obtain $\dim \mathcal{H}^{1,3}(K\# K)
= 2$ since the Euler characteristic of an exact sequence must
vanish.
\item If $j = 1$, then we have
\begin{align*}
\ldots \rightarrow \mathcal{H}^{0,3}(K\# K) \rightarrow
\mathcal{H}^{0,2}(K\sqcup K) \rightarrow & \mathcal{H}^{0,1}(K\# K)
\rightarrow \mathcal{H}^{1,3}(K\# K) \\  \rightarrow
\mathcal{H}^{1,2}(K\sqcup K) \rightarrow \mathcal{H}^{1,1}(K\# K)
\rightarrow & \mathcal{H}^{2,3}(K\# K) \rightarrow
\mathcal{H}^{2,2}(K\sqcup K)
\rightarrow \mathcal{H}^{2,1}(K\# K) \rightarrow \ldots \\
0 \rightarrow \mathbb{Q} \rightarrow \mathcal{H}^{0,1}(K\# K)
\rightarrow \mathbb{Q} \oplus \mathbb{Q} \rightarrow & \mathbb{Q}
\oplus \mathbb{Q} \rightarrow \mathcal{H}^{1,1}(K\# K) \rightarrow
\mathcal{H}^{2,3}(K\# K) \rightarrow \mathbb{Q} \rightarrow 0, \\
\end{align*}
Now we have $\dim \mathcal{H}^{1,1}(K\# K) = \dim
\mathcal{H}^{2,5}(K\# K) = 2$ as a result of the third statement of
Lemma \ref{seq}. Also, we have $\dim \mathcal{H}^{0,1}(K\# K) \dim
\mathcal{H}^{-1,-3}(K\# K) + 1 = \dim \mathcal{H}^{1,3}(K\# K)+ 1 =
3$. The last isomorphism follows as a result of Theorem
\ref{properties} and [Corollary 11,\cite{K1}]. Therefore, we obtain
$\dim \mathcal{H}^{2,3}(K\# K) = 1$ since the Euler characteristic
of an exact sequence must vanish.
\end{enumerate}

Finally the result follows by [Corollary 11,\cite{K1}] 
since $K\# K$ is equivalent to its mirror image.
\end{proof}

\begin{thm}\label{homology}
The Kanenobu knot $K(p,0)$ for a negative integer $p$ is
homologically thin over $\mathbb{Q}$ and its Khovanov bigraded
homology groups are given as follows:

\[
\dim \mathcal{H}^{i,j}(K(p,0)) =
\begin{cases}
\dim \mathcal{H}^{-p,-2p-1}(K(0,0)) + 1, & \text{if} \ {(i,j) =
(0,1)},
      \\
    \dim \mathcal{H}^{-p,-2p+1}(K(0,0)) + 1,
     & \text{if} \ {(i,j) = (0,-1),}
     \\
\dim \mathcal{H}^{-p-1,-2p-3}(K(0,0)), & \text{if} \  {(i,j) = (-1,
-3)  \ \text{and}  \ p \neq -1 },
      \\
\dim \mathcal{H}^{-p-1,-2p-1}(K(0,0)), & \text{if} \  {(i,j) = (-1,
-1)  \ \text{and}  \ p \neq -1 },
      \\
\dim \mathcal{H}^{0,1}(K(0,0)) - 1, & \text{if} \  {(i,j) = (p, 2p +
1) },
\\
\dim \mathcal{H}^{0,-1}(K(0,0)) - 1, & \text{if} \  {(i,j) = (p, 2p
- 1) },
\\
\dim \mathcal{H}^{i + p, j + 2p}(K(0,0)), & \text{otherwise}.
     \\
\end{cases}
\]
\end{thm}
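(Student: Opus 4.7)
The plan is to prove the theorem by induction on $n = -p \geq 0$, with the base case $n=0$ handled by the preceding proposition giving $\mathcal{H}^{i,j}(K(0,0))$. For the inductive step, I would fix a standard diagram $D$ of $K(p,0)$ and choose a crossing in the left twist region (the $p$-twist region). Under the $0$- and $1$-resolution at this crossing, one of the two resolutions (say $D(*0)$) yields a diagram of $K(p+1,0)$ after planar isotopy, while the other resolution $D(*1)$ is a much simpler diagram whose Khovanov homology is easily computed directly (it reduces after Reidemeister moves to a small link whose homology is concentrated in a narrow bigrading range). The main work then is to feed these into the long exact sequence (\ref{sequence}), tracking $x(D)$, $y(D)$ and using (\ref{relation}) to convert between $\overline{\mathcal{H}}$ and $\mathcal{H}$.

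Away from the exceptional bidegrees listed in the theorem, I expect the $D(*1)$ terms in the long exact sequence to vanish, so that the sequence produces isomorphisms between $\mathcal{H}^{i,j}(K(p,0))$ and the corresponding group for $K(p+1,0)$. The induction hypothesis then immediately delivers the "otherwise" formula $\dim \mathcal{H}^{i,j}(K(p,0)) = \dim \mathcal{H}^{i+p,j+2p}(K(0,0))$. Along the way the absence of new diagonals in the sequence propagates homological thinness of $K(p+1,0)$ to homological thinness of $K(p,0)$, which is needed to apply Lemma \ref{seq}.

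For the exceptional bidegrees $(0,\pm 1)$, $(-1,-1)$, $(-1,-3)$, and $(p, 2p \pm 1)$, the connecting maps in the long exact sequence are genuinely nontrivial and the sequence alone does not determine the dimensions. Here I would invoke Lemma \ref{seq}, whose three relations apply because $K(p,0)$ is slice by Lemma \ref{slice} (so $s(K(p,0)) = 0$) and has signature zero by Lemma \ref{signaturel}, and because we have just established thinness. Combined with the Euler characteristic constraint furnished by the known Jones polynomial $V(p,0) = (-t)^{p}(V(0,0)-1)+1$ from Proposition \ref{properties}(4), these three relations pin down the "+1", "$-1$", and the unshifted $(-1,-1)$, $(-1,-3)$ entries.

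The main obstacle will be the careful bookkeeping around the boundary bidegrees, where the long exact sequence has several consecutive nontrivial entries and one must simultaneously verify thinness and read off the four or five exceptional dimensions from a single exact fragment. The case $p=-1$ needs separate attention because the bidegrees $(-1, 2p\pm1) = (-1, -1)$ and $(-1,-3)$ collide with the shifted extremal bidegrees $(p, 2p\pm 1)$, which is precisely why the statement excludes $p = -1$ from the lines involving $(-1,-3)$ and $(-1,-1)$.
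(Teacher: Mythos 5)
Your plan matches the paper's proof in all essentials: induction on $|p|$, resolving one crossing of the twist region so that $D(*0)$ is $K(p+1,0)$ and $D(*1)$ is a two-component unlink, isomorphisms from the long exact sequence away from the exceptional bidegrees, and Lemma \ref{seq} (via $s=0$ from sliceness and $\sigma=0$) together with Euler-characteristic bookkeeping to resolve the ambiguous connecting maps and propagate thinness. The only cosmetic difference is that the paper extracts the Euler-characteristic constraint from the exact sequence fragments (and a Lee spectral sequence argument to rule out one subcase) rather than from the Jones polynomial, which is the same information.
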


\begin{proof}

We show the claim by induction on $|p|$. We resolve any crossing of
the $p$-crossings to obtain $D(*0)$ and $D(*1)$. It is clear that
$D(*0)$ is a diagram of the Kanenobu knot $K(p + 1, 0)$ and $D(*1)$
is a diagram of the unlink of two components as seen in figure
\ref{figure14}.

\begin{figure}[h]
\centering
\includegraphics[width=10cm,height=8cm]{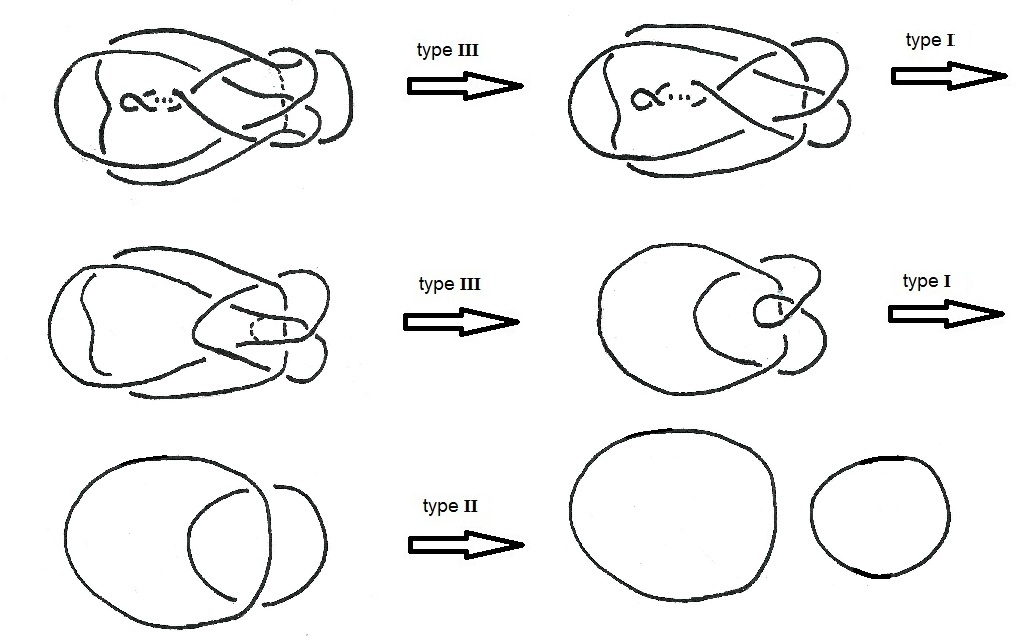}
\caption{Resolving one of the crossings of the diagram of $K(p,0)$.}
\label{figure14}
\end{figure}

From the induction hypothesis, $K(p + 1, 0)$ is homologically thin.
Therefore, $\mathcal{H}(K(p+1,0))$ is supported on the two lines $j
- 2i = \sigma(K(p + 1, 0)) \pm 1 =  \pm 1$. As $x(K(p+1,0)) =
x(D(*1)) = 3-p$ and $y(K(p+1,0)) = y(D(*1)) = 4$, then
$\overline{\mathcal{H}}(K(p+1,0))$ is supported on the two lines $j
- 2i = -4 \pm 1$. Also, we have

\[
\overline{\mathcal{H}}^{i, j}(D(*1)) =
\begin{cases}
\mathbb{Q}, & \text{if $(i, j) = (3-p, 4-2p)$, or $(3-p, -2p)$},
\\
\mathbb{Q}\oplus\mathbb{Q}, & \text{if $(i, j) = (3-p, 2- 2p)$},
\\
0, & \text{otherwise}.
\end{cases}
\]
Now we obtain $\dim \overline{\mathcal{H}}^{i, j}(K(p,0)) = \dim
\overline{\mathcal{H}}^{i, j}(K(p+1,0))$ directly from the long
exact sequence on homology except on the following three cases:
\begin{enumerate}
    \item
    \[
     0 \rightarrow \mathbb{Q} \rightarrow \overline{\mathcal{H}}^{4-p, 5- 2p}(K(p,0)) \rightarrow
     \overline{\mathcal{H}}^{4-p, 5 -2p}(K(p+1,0)) \rightarrow 0.
     \]
     Therefore, we obtain $\dim \overline{\mathcal{H}}^{4-p, 5-2p}(K(p,0))= \dim
     \overline{\mathcal{H}}^{4-p, 5-2p}(K(p+1,0)) + 1$.
      \item
     \[
     0 \rightarrow \overline{\mathcal{H}}^{3-p, 1-2p}(K(p,0))\rightarrow
     \overline{\mathcal{H}}^{3-p, 1-2p}(K(p+1,0)) \stackrel{\delta}{\rightarrow}
     \mathbb{Q} \rightarrow \overline{\mathcal{H}}^{4-p, 1-2p}(K(p,1)) \rightarrow 0.
     \]
     We have two subcases and only the second one holds:
\begin{enumerate}
    \item $\dim \overline{\mathcal{H}}^{3-p, 1-2p}(K(p,0)) = \dim
    \overline{\mathcal{H}}^{3-p, 1-2p}(K(p+1,0))$ and

    $\dim\overline{\mathcal{H}}^{4-p, 1-2p}(K(p,0)) = 1$.
    By induction,
     $K(p+1,0)$ is homologically thin.
    So by Lemma \ref{seq}, we obtain $\dim \overline{\mathcal{H}}^{3-p, 1-2p}(K(p+1,0)) =
    \dim \overline{\mathcal{H}}^{4-p, 5-2p}(K(p+1,0)) + 1$.
    Hence we conclude $ \dim \overline{\mathcal{H}}^{3-p, 1-2p}(K(p,0)) =
    \dim \overline{\mathcal{H}}^{4-p, 5-2p}(K(p,0))$.
    Now in the spectral sequence:
    \[  0 \rightarrow \overline{\mathcal{H}}^{3-p, 1-2p}(K(p,0)) \stackrel{d_{1}}{\rightarrow}
    \overline{\mathcal{H}}^{4-p, 5-2p}(K(p,0))\rightarrow 0.\]
    The map  $d_{1}$ has to
    be injective and not surjective since in the $E_{\infty}$-page one copy
    of $\mathbb{Q}$ survives at the $j$-grading  1 and not at the $j$-grading -3.
    This is impossible as the domain and the codomain have the same dimension.

    \item $\dim \overline{\mathcal{H}}^{3-p, 1-2p}(K(p,0)) = \dim
    \overline{\mathcal{H}}^{3-p, 1-2p}(K(p+1,0)) - 1,$ and

    $\dim
    \overline{\mathcal{H}}^{4-p, 1-2p}(D) = 0$.
\end{enumerate}

     \item
     \begin{align*}
    & 0 \rightarrow \overline{\mathcal{H}}^{3-p, 3-2p}(K(p,0))
    \rightarrow \overline{\mathcal{H}}^{3-p, 3-2p}(K(p+1,0))
    \stackrel{\delta}{\rightarrow} \mathbb{Q}\oplus \mathbb{Q} \\ & \rightarrow
    \overline{\mathcal{H}}^{4-p, 3-2p}(K(p,0)) \rightarrow
    \overline{\mathcal{H}}^{4-p, 3-2p}(K(p+1,0)) \rightarrow 0.
    \end{align*}
    We conclude that the Kanenobu knot $K(p,0)$ is homologically thin
    as the its bigraded homology is supported on $ j - 2i = -4 \pm 1$.
    Therefore as a result of the third statement of Lemma \ref{seq}, we obtain
    \begin{align*}
    \dim \overline{\mathcal{H}}^{3-p, 3-2p}(K(p,0)) + 1 & =
    \dim \overline{\mathcal{H}}^{2-p, -1-2p}(K(p,0)) + 1 \\
    & = \dim \overline{\mathcal{H}}^{2-p, -1-2p}(K(p+1,0)) + 1 \\
    & = \dim \overline{\mathcal{H}}^{3-p, 3-2p}(K(p+1,0)).
    \end{align*}

Also since the Euler characteristic of an exact sequence must
vanish, we obtain
\begin{align*}
\dim \overline{\mathcal{H}}^{4-p, 3-2p}(K(p,0)) = \dim
\overline{\mathcal{H}}^{4-p, 3-2p}(K(p+1,0)) + 1.
\end{align*}

\end{enumerate}
Finally, the recursion holds from the fact that $x(K(p,0)) = 4-p$
and $y(K(p,0)) = 4$ and equation \ref{relation}.
\end{proof}

\begin{minipage}{0.45\textwidth}

As a result of Theorem \ref{properties} and [Corollary
11,\cite{K1}], we obtain
\begin{coro}\label{image}
The Kanenobu knot $K(p,0)$ for a positive integer $p$ is
homologically thin over $\mathbb{Q}$ and its Khovanov bigraded
homology groups are given by $\mathcal{H}^{i,j}(K(p,0)) =
\mathcal{H}^{-i,-j}(K(-p,0))$.
\end{coro}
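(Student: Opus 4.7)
The plan is very short, since all the real work was done in Theorem \ref{homology}. Two ingredients suffice: Proposition \ref{properties}(2), which identifies the mirror image of a Kanenobu knot as $K'(p,q) \approx K(-p,-q)$, and the standard mirror-symmetry formula for Khovanov homology, namely $\mathcal{H}^{i,j}(L') \cong \mathcal{H}^{-i,-j}(L)$, recorded in [Corollary 11, \cite{K1}]. These are exactly the two references already flagged in the paragraph preceding the corollary.

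First I would specialize Proposition \ref{properties}(2) to $q = 0$ to conclude that $K(p,0)$ is the mirror image of $K(-p,0)$. Composing this identification with the Khovanov mirror-symmetry formula immediately gives
\[
\mathcal{H}^{i,j}(K(p,0)) \;\cong\; \mathcal{H}^{-i,-j}(K(-p,0)),
\]
which is the bigraded statement asserted in the corollary. Next, because $p$ is a positive integer, $-p$ is a negative integer, so Theorem \ref{homology} applies to $K(-p,0)$ and tells us that its rational Khovanov homology is supported on the two adjacent diagonals $j-2i = \sigma(K(-p,0)) \pm 1$, i.e.\ $K(-p,0)$ is homologically thin. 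Under the involution $(i,j) \mapsto (-i,-j)$ a pair of adjacent diagonals $j - 2i = c \pm 1$ is sent to the pair $j - 2i = -c \mp 1$, which is again a pair of adjacent diagonals; consequently $K(p,0)$ is homologically thin as well.

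There is no genuine obstacle here: the whole content is a one-line application of the two quoted facts. The only thing to be careful about is that the grading shift in the mirror-symmetry isomorphism really does send the thin support diagonals to thin support diagonals with the correct signature shift, which is immediate from $\sigma(K') = -\sigma(K)$ together with the identification $K(p,0) \approx K'(-p,0)$. Once that bookkeeping is done, the recursion is complete and the corollary follows.
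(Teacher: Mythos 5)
Your argument is correct and is exactly the paper's: the corollary is deduced from Proposition \ref{properties}(2) (so $K(p,0)$ is the mirror of $K(-p,0)$) together with the mirror-symmetry formula $\mathcal{H}^{i,j}(L') \cong \mathcal{H}^{-i,-j}(L)$ of [Corollary 11, \cite{K1}], applied to Theorem \ref{homology}. Your extra remark that the involution $(i,j)\mapsto(-i,-j)$ carries the thin diagonals $j-2i=c\pm 1$ to $j-2i=-c\mp 1$, consistent with $\sigma(K')=-\sigma(K)$, is just the bookkeeping the paper leaves implicit.
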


We show that $\mathcal{H}^{i,j}(K(p,q))$ depends only on the sum of
$p$ and $q$ and this generalizes [Theorem 7, \cite{GW}]. So we can
compute the rational Khovanov bigraded homology groups of any
Kanenobu knot by combining Theorem \ref{homology} and Corollary
\ref{image}.

\begin{prop}
For any two integers $p$ and $q$, we have
\[\mathcal{H}^{i,j}(K(p,q)) \equiv \mathcal{H}^{i,j}(K(p + q, 0)).\]
\end{prop}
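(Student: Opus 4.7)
The plan is to induct on $|q|$, with the base case $q=0$ being a tautology. For the inductive step, using Proposition \ref{properties} we may assume $q>0$. Resolve a crossing in the $q$-twist region of the standard diagram of $K(p,q)$ from Figure \ref{figure13}; by direct analogy with the local picture used in the proof of Theorem \ref{homology} (see Figure \ref{figure14}), the $0$-resolution $D(*0)$ is a diagram of $K(p,q-1)$ with one fewer twist in that region, while the $1$-resolution $D(*1)$ is a diagram of the unlink of two components. Feeding this into the long exact sequence \eqref{sequence} relates $\overline{\mathcal{H}}^{i,j}(K(p,q))$ to $\overline{\mathcal{H}}^{i,j}(K(p,q-1))$ and the Khovanov homology of the two-component unlink, which is supported in a handful of bidegrees with total dimension $4$.

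By the induction hypothesis $\mathcal{H}^{i,j}(K(p,q-1))\equiv \mathcal{H}^{i,j}(K(p+q-1,0))$, and this is completely known through Theorem \ref{homology} and Corollary \ref{image}; in particular, $K(p,q-1)$ is homologically thin. Now the key observation is that resolving a crossing in the single twist region of $K(p+q,0)$ yields the same pair $(D(*0),D(*1))=(K(p+q-1,0),\text{two-component unlink})$, up to the same $j$-grading shifts determined by $x(D)$ and $y(D)$. Consequently, the long exact sequence for $K(p,q)$ and the long exact sequence for $K(p+q,0)$ are termwise identical except at the positions occupied by $\mathcal{H}^{i,j}(K(p,q))$ versus $\mathcal{H}^{i,j}(K(p+q,0))$. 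I would then redo the bidegree-by-bidegree case analysis of the proof of Theorem \ref{homology} essentially verbatim for $K(p,q)$, invoking Lemma \ref{seq}, the fact that $K(p,q)$ is slice by Theorem \ref{main} (so $s(K(p,q))=0$ by \eqref{s}), and the Lee spectral sequence of Proposition \ref{lee}, to pin down the ranks of the connecting maps $\delta$ and conclude that $\dim\mathcal{H}^{i,j}(K(p,q))$ matches $\dim\mathcal{H}^{i,j}(K(p+q,0))$ in every bidegree.

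The main obstacle is that, \emph{a priori}, the connecting maps $\delta$ in the two long exact sequences need not have matching ranks, even though the surrounding terms agree---the identification has content beyond pure diagram chasing. Overcoming this requires propagating all the structural ingredients used in Theorem \ref{homology}---slice-ness, vanishing of the $s$-invariant, homological thinness, and the shape of the Lee $E_{\infty}$-page---to the full family $K(p,q)$ during the induction. Once this is established, the two case analyses run in parallel and yield the desired equality of Khovanov homology in every bidegree.
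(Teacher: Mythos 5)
Your proposal is sound in outline, and it is more cautious than the paper at exactly the right spot, so it is worth comparing the two routes. The paper also resolves one crossing of a twist region so that the long exact sequence \eqref{sequence} relates $K(p,q)$ to a Kanenobu knot with one fewer twist and to the two-component unlink, but it normalizes differently (it assumes $p<0$ and $|p|>q$, resolves in the $p$-region so that the knot resolution is $K(p+1,q)$, and inducts on $|p+q|$), and it concludes by placing the sequences for $K(p,q)$ and $K(p+q,0)$ in a commutative ladder whose outer vertical maps are identifications and invoking the five lemma. Your worry --- that equal outer terms do not determine the middle term because the ranks of the connecting maps $\delta$ matter, and no map between the two theories is actually constructed --- is precisely what that shortcut glosses over; your substitute, re-running the case analysis of Theorem \ref{homology} (thinness of the resolved knot from the inductive hypothesis, Lemma \ref{seq} with $s=0$ and $\sigma=0$ supplied by ribbonness/sliceness of all $K(p,q)$, and the Lee spectral sequence of Proposition \ref{lee} to decide which subcase of the connecting map occurs), is the honest mechanism that pins the ranks down, at the price of redoing a bidegree-by-bidegree computation that the paper avoids. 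Two details to repair when writing it up: for $q>0$ the resolution yielding $K(p,q-1)$ is the opposite one from the $p<0$ picture of Figure \ref{figure14} (simplest fix: mirror first, via Proposition \ref{properties} and the behaviour of Khovanov homology under mirror image, so that the resolved twist region is negative and the conventions match the paper's); and your claim that the long exact sequence for $K(p+q,0)$ is ``termwise identical'' is only literally available when the twist region of $K(p+q,0)$ carries crossings of the same handedness as the one you resolved --- it fails, for instance, when $q>0$ but $p+q\le 0$, where resolving a crossing of $K(p+q,0)$ produces $K(p+q+1,0)$ rather than $K(p+q-1,0)$. This mismatch does not sink the argument, because both $\mathcal{H}(K(p+q-1,0))$ and $\mathcal{H}(K(p+q,0))$ are explicitly known from Theorem \ref{homology}, Corollary \ref{image} and the $K(0,0)$ computation: once your forcing argument determines $\mathcal{H}^{i,j}(K(p,q))$ uniquely, you simply compare it with the known value of $\mathcal{H}^{i,j}(K(p+q,0))$, keeping track of the shifts $x(D)$ and $y(D)$ exactly as the paper's proof does at its outset.
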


\begin{proof}
We can assume that $p<0$ and $|p| > q$ using the second part of
Proposition \ref{properties}.  We have $x(K(p,q)) = x(K(p+q,0)),
y(K(p,q)) = y(K(p+q,0)), x(K(p+1,q)) = x(K(p+q+1,0)), y(K(p+1,q)) =
y(K(p+q+1,0)),$ and $x(U_{1}) = x(U_{2}), y(U_{1}) = y(U_{2}),$
where $U_{1}$ and $U_{2}$ are diagrams of the unlink of two
components obtained by resolving one of the $p$-th and $p+q$-th
crossings in the diagrams of $K(p,q)$ and $K(p+q,0)$ respectively.
Therefore, it is enough to show that
$\overline{\mathcal{H}}^{i,j}(K(p,q)) \equiv
\overline{\mathcal{H}}^{i,j}(K(p + q, 0))$.

The long exact sequences for $K(p,q)$ and $K(p + q, 0)$ as in
equation \ref{sequence} are isomorphic as seen in the following
commutative diagram using the five lemma if we use induction on
$|p+q|$.
\end{proof}
\end{minipage}%
\hfill
\begin{minipage}{0.45\textwidth}
\begin{turn}{-90}
$\begin{CD} \overline{\mathcal{H}}^{i-1,j}(K(p+1,q)) @>>>
\overline{\mathcal{H}}^{i-1,j-1}(U_{1}) @>>>
\overline{\mathcal{H}}^{i,j}(K(p,q)) @>>>
\overline{\mathcal{H}}^{i,j}(K(p+1,q))@>>> \overline{\mathcal{H}}^{i,j-1}(U_{1})\\
 @|        @|     @VVV     @|           @|\\
\overline{\mathcal{H}}^{i-1,j}(K(p+q+1,0))@>>>
\overline{\mathcal{H}}^{i-1,j-1}(U_{2}) @>>>
\overline{\mathcal{H}}^{i,j}(K(p+q,0)) @>>>
\overline{\mathcal{H}}^{i,j}(K(p+q+1,0))@>>>
\overline{\mathcal{H}}^{i,j-1}(U_{2})
\end{CD}$
\end{turn}
\end{minipage}%








\end{document}